\newtheorem{theorem}{Theorem}[section]
\newtheorem{lemma}[theorem]{Lemma}
\newtheorem{proposition}[theorem]{Proposition}
 \theoremstyle{definition}
\newtheorem{definition}[theorem]{Definition}
\theoremstyle{remark}
\newtheorem{remark}[theorem]{Remark}
\numberwithin{equation}{section}
\begin{document}

\title[]
{Locally uniform ellipticity of the fractional Hessian operators}
\author{Ziyu Gan}
\address{School of Mathematics, Harbin Institute of Technology,
         Harbin, Heilongjiang 150001, China}
\email{24B912023@stu.hit.edu.cn}
\author{Heming Jiao}
\address{School of Mathematics and Institute for Advanced Study in Mathematics, Harbin Institute of Technology,
         Harbin, Heilongjiang 150001, China}
\email{jiao@hit.edu.cn}
\thanks{The second author is supported by the NSFC (Grant No. 12271126) and the Natural Science Foundation of Heilongjiang Province (Grant No. YQ2022A006)}


\begin{abstract}
In \cite{caffarelli2015}, Caffarelli-Charro introduced a fractional Monge-Amp\`{e}re operator. Later, Wu \cite{wu2019} generalized it
to a fractional analogue of $k$-Hessian operators and proved the strict ellipticity for $k=2$. In this paper, we
introduce a fractional analogue of general Hessian operators and prove the stability.
We also show that the fractional analogue $k$-Hessian operators defined in \cite{wu2019} are strictly elliptic with respect to
convex solutions for all $2 \leq k \leq n$. Furthermore, we provide a new proof for the case $k=2$ without the convexity condition.

\noindent{Keywords:} Fractional Hessian operators; Strict ellipticity; Integro-differential equations.
\end{abstract}

\maketitle

\section{introduction}

In this paper, we consider a fractional setting for the Hessian operators
\begin{equation}
\label{GJ-1}
f (\lambda (D^2 u))
\end{equation}
by writing it as a concave envelope of linear operators as in \cite{caffarelli2015} and \cite{wu2019}, where $f$ is a symmetric smooth function of $n$ variables,
$D^2 u$ is the Hessian of a function $u$ defined on $\mathbb{R}^n$ or a subset of $\mathbb{R}^n$ and $\lambda (D^2 u) = (\lambda_1, \ldots, \lambda_n)$ denotes the eigenvalues of
$D^2 u$. Following \cite{caffarelli1985}, we assume $f$ to be defined in an open, convex, symmetric cone $\Gamma \subset \mathbb{R}^n$ with vertex at the origin, containing
\[
\Gamma_n := \{\lambda \in \mathbb{R}^n: \mbox{ each component } \lambda_i > 0\} \neq \mathbb{R}^n,
\]
and to satisfy the standard structure conditions throughout the paper:
\begin{equation}
\label{f1}
f_i := \frac{\partial f}{\partial \lambda_i} > 0 \mbox{ in } \Gamma, \ 1 \leq i \leq n,
\end{equation}
\begin{equation}
\label{f2}
f \mbox{ is concave in } \Gamma
\end{equation}
and
\begin{equation}
\label{f3}
f \in C^2 (\Gamma) \cap C^0 (\overline{\Gamma}), f > 0 \mbox{ in } \Gamma \mbox{ and } f = 0
  \mbox{ on } \partial \Gamma.
\end{equation}
Furthermore, we assume that $f$ satisfies
\begin{equation}
\label{f4}
f \mbox{ is homogeneous of degree one.}
\end{equation}
Let $F$ be the function defined by
\[
F (A) = f (\lambda (A))
\]
for a symmetric $n\times n$ matrix $A = \{A_{ij}\}$ with $\lambda (A) \in \Gamma$ and
\[
F^{ij} (A) = \frac{\partial F}{\partial A_{ij}}, \ 1 \leq i,j \leq n.
\]
For simplicity, we still denote $\Gamma$ to be the set of all symmetric $n\times n$ matrices $A = \{A_{ij}\}$ with $\lambda (A) \in \Gamma$.
By \eqref{f1} and \eqref{f2}, we find the matrix $\{F^{ij} (A)\}$ is positive definite for any $A \in \Gamma$
and $F$ is also concave in $\Gamma$. (The reader is referred to \cite{Spruck05} for details.)
By \eqref{f4}, we find
\[
F^{ij} A_{ij} = F (A), \mbox{ for each } A \in \Gamma.
\]
Thus,
for any $A = \{A_{ij}\}, B = \{B_{ij}\} \in \Gamma$, we have
\[
F^{ij} (B) (A_{ij} - B_{ij}) \geq F (A) - F (B)
\]
and
\begin{equation}
\label{GJ-2}
F (A) = \inf_{M \in \mathcal{M}} M^{ij} A_{ij},
\end{equation}
where $\mathcal{M}$ is a subset of all positive definite symmetric $n\times n$ matrices defined by
\[
\mathcal{M} := \{M = \{M^{ij}\}: \mbox{ there exists } T \in \Gamma \mbox{ such that } M^{ij} = F^{ij} (T)\}.
\]
Therefore, for $u \in C^2 (\mathbb{R}^n)$, we have
\begin{equation}
\label{GJ-3}
F (D^2 u (x)) = \inf_{M \in \mathcal{M}} \mathrm{trace} (M D^2 u (x)) = \inf_{M \in \mathcal{M}} \Delta (u \circ \sqrt{M}) (\sqrt{M}^{-1} x).
\end{equation}
Let us recall the definition of the fractional Laplacian (c.f. \cite{chen2020}):
\[
\begin{aligned}
	-(-\Delta)^s u(x) &= c_{n,s} \, \text{P.V.} \int_{\mathbb{R}^n} \frac{u(y) - u(x)}{|y - x|^{n + 2s}} \, dy \\
	&= \frac{c_{n,s}}{2} \, \int_{\mathbb{R}^n} \frac{u(x + y) + u(x - y) - 2u(x)}{|y|^{n + 2s}} \, dy
\end{aligned}
\]
for $0 < s < 1$, where
\[ c_{n,s} = \frac{4^s \Gamma (\frac{n}{2} + s)}{\pi^{n/2}\Gamma(-s)} \]
is a normalization constant. We then define the fractional Hessian operator as \cite{caffarelli2015} and \cite{wu2019} by
\begin{equation}
\label{GJ-4}
\begin{aligned}
   F_{s}[u](x) = \,& \inf_{M \in \mathcal{M}} \Big\{- c_{n,s}^{-1}(-\Delta)^s  (u \circ \sqrt{M}) (\sqrt{M}^{-1} x)\Big\}\\
  = \,& \inf_{M \in \mathcal{M}} \Big\{\text{P.V.} \int_{\mathbb{R}^n}\frac{u(x + y) - u(x)}{|\sqrt{M}^{-1}y|^{n+2s}} \det \sqrt{M}^{-1} \, dy\Big\}\\
   = \,& \inf_{M \in \mathcal{M}} \Big\{ \frac{1}{2} \int_{\mathbb{R}^n} \frac{\delta (u,x,y)}{|\sqrt{M}^{-1}y|^{n+2s}} \det \sqrt{M}^{-1} \, dy \Big\},
\end{aligned}
\end{equation}
where
\[
\delta (u,x,y) := u(x + y) + u(x - y) - 2u(x).
\]
Our first result is the stability of the operator $F_s$.
\begin{theorem}
\label{GJ-thm1}
Assume that $f$ satisfies \eqref{f1}-\eqref{f4}.
If \(u\) is admissible (See Definition \ref{admissible} in Section 2), asymptotically linear and \(\frac{1}{2} < s < 1\), then
\begin{equation}
\label{GJ-5}
\lim _{s \to 1} ((1-s) F_{s}[u](x)) = \frac{\omega_{n}}{4} F \left(D^{2} u(x)\right)
\end{equation}
holds in the viscosity sense.
\end{theorem}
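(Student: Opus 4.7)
The plan is to reduce the statement to the classical scaling limit for the fractional Laplacian applied to each linearized integrand $-c_{n,s}^{-1}(-\Delta)^s(u \circ \sqrt{M})$, and then to commute the limit $s \to 1^-$ with the infimum over $\mathcal{M}$. The key analytic input is the standard asymptotic $c_{n,s} \sim 4(1-s)/\omega_n$ as $s \to 1^-$, combined with the expansion $\delta(v,x_0,y) = \langle D^2 v(x_0)\,y, y\rangle + o(|y|^2)$ near $y = 0$ and a uniform control of the tail $|y| > 1$ via asymptotic linearity of $v$. A direct computation in polar coordinates then yields
\begin{equation*}
\lim_{s \to 1^-} (1-s)\bigl(-c_{n,s}^{-1}(-\Delta)^s v\bigr)(x_0) = \frac{\omega_n}{4}\,\Delta v(x_0)
\end{equation*}
for every $v$ that is $C^2$ at $x_0$ and asymptotically linear. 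Specializing to $v = u \circ \sqrt{M}$ at $\sqrt{M}^{-1} x_0$, the identity $\Delta(u \circ \sqrt{M})(\sqrt{M}^{-1} x_0) = \operatorname{tr}(M\,D^2 u(x_0))$ delivers, for each fixed $M \in \mathcal{M}$, the pointwise limit $\frac{\omega_n}{4}\operatorname{tr}(M\,D^2 u(x_0))$.

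For the viscosity upper bound, I would use the defining inequality $F_s[u](x_0) \leq -c_{n,s}^{-1}(-\Delta)^s(u \circ \sqrt{M})(\sqrt{M}^{-1} x_0)$ for each $M \in \mathcal{M}$, multiply by $(1-s) > 0$, send $s \to 1^-$ with $M$ fixed, and then take the infimum over $\mathcal{M}$ via \eqref{GJ-2}. For the lower bound I would pick $\varepsilon$-near minimizers $M_s \in \mathcal{M}$ for $F_s[u](x_0)$ and extract a subsequential limit $M_s \to M_\ast$ in a compact subset of $\mathcal{M}$; the pointwise limit at $M_\ast$ then gives $\liminf_{s \to 1^-}(1-s)F_s[u](x_0) \geq \frac{\omega_n}{4}\operatorname{tr}(M_\ast\,D^2 u(x_0)) \geq \frac{\omega_n}{4}F(D^2 u(x_0))$.

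To interpret the conclusion in the viscosity sense when $u$ is only admissible, one tests against a smooth admissible $\phi$ touching $u$ at $x_0$, replaces $u$ by $\phi$ on a small ball $B_r(x_0)$ where the sign of $u - \phi$ controls the local part of the integrand, and keeps $u$ on the complement. The condition $s > 1/2$ enters precisely here: it guarantees that the tail $\int_{|y|>1}|\sqrt{M}^{-1} y|^{-n-2s}\det\sqrt{M}^{-1}\,dy$ is finite and bounded uniformly in $s$ and in $M$ over compact subsets of $\mathcal{M}$, and that its contribution vanishes after multiplication by $(1-s)$, since the asymptotic linearity only gives $\delta(u,x_0,y) = O(1)$ for $|y|$ large.

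The main obstacle I expect is the commutation of the infimum with the limit, which reduces to a compactness question for the near-minimizers $M_s \in \mathcal{M}$. Although $\mathcal{M} = \{F^{ij}(T) : T \in \Gamma\}$ is stable under positive scaling of $T$ (by \eqref{f4}, $F^{ij}$ is homogeneous of degree zero), it need not be closed, and $M_s$ could in principle have eigenvalues degenerating to $0$ or $\infty$. Ruling this out requires the strict ellipticity \eqref{f1} at the admissible matrix $D^2 u(x_0)$ (or at the test matrix $D^2\phi(x_0)$), together with the asymptotic linearity of $u$, so that such degenerate $M_s$ cannot approach the infimum in \eqref{GJ-4}. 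Once the effective domain of minimization is compactified, uniform convergence reduces everything to the fixed-$M$ pointwise limit established first.
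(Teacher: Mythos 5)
Your overall decomposition (local quadratic part + error + tail, upper bound by fixing $M$, viscosity interpretation by a local test function) matches the paper's Section 3 argument, and the upper bound direction is essentially identical. The place where you genuinely diverge from the paper is the lower bound. You propose to take near-minimizers $M_s$ of $F_s[u](x_0)$, pass to a subsequential limit $M_s\to M_\ast$, apply the pointwise limit at $M_\ast$, and only then invoke $\mathrm{tr}(M_\ast D^2u)\ge F(D^2u)$. This route requires two nontrivial additions that you flag as the "main obstacle": compactness of the near-minimizers inside $\mathcal{M}$, and a uniformity statement allowing you to replace $M_s$ by $M_\ast$ inside the integral before sending $s\to 1$. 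The paper sidesteps both by reordering your last step: it uses the inequality $\mathrm{tr}(M D^2u(x))\ge \inf_{M'\in\mathcal{M}}\mathrm{tr}(M'D^2u(x))=F(D^2u(x))$ \emph{pointwise in $M$ and before any limit}, so that after the change of variables and the quadratic expansion near $y=0$ one gets, for any near-minimizer $M_{\varepsilon'}$,
\begin{equation*}
(1-s)F_s[u](x)\ \ge\ \frac{\omega_n}{4}\rho^{2-2s}\,\mathrm{tr}(M_{\varepsilon'}D^2u(x))-\text{(errors)}\ \ge\ \frac{\omega_n}{4}\rho^{2-2s}F(D^2u(x))-\text{(errors)}-\varepsilon',
\end{equation*}
and one never needs to know where $M_{\varepsilon'}$ converges. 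In short: the observation you place at the end of your chain is exactly the device that, placed earlier, makes the subsequence extraction and the compactification of $\mathcal{M}$ unnecessary. As written, your proposal sets up a harder problem than the paper actually solves; if you cannot supply the compactness argument (and indeed $\mathcal{M}$ is not closed and contains matrices with arbitrarily large top eigenvalue even under the normalization $f=1$), your lower bound is incomplete, whereas the paper's version does not need it for the leading term. You should also note that in both treatments the error terms still carry factors of $\lambda_{\max}(M_{\varepsilon'})$ -- the tail estimate and the quadratic remainder -- so some control on the near-minimizers' size is still quietly needed; the paper does not fully address this either, but it is no longer the centerpiece of the argument once the infimum inequality is used first. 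Finally, a small remark: the definition of $F_s$ already has the factor $c_{n,s}^{-1}$ built in, so the limit $(1-s)F_s[u]\to\frac{\omega_n}{4}F(D^2u)$ follows directly from the explicit polar-coordinate computation $\int_{B_\rho}\langle D^2u\,\sqrt{M}y,\sqrt{M}y\rangle|y|^{-n-2s}\,dy=\frac{\rho^{2-2s}}{2-2s}\omega_n\,\mathrm{tr}(MD^2u)$; the asymptotic $c_{n,s}\sim 4(1-s)/\omega_n$ you cite as the key input is therefore not actually needed and risks a sign/normalization mix-up.
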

In this work, we shall study the existence and regularity of solutions to the Dirichlet problem
\begin{equation}\label{1}
  \begin{cases}
  	F_s[u](x) = u(x) - \phi(x) & \text{in } \mathbb{R}^n \\
  	(u - \phi)(x) \to 0 & \text{as } |x| \to \infty,
  \end{cases}
\end{equation}
where $\phi \in C^{2, \alpha} (\mathbb{R}^n)$ is the prescribed boundary data at infinity. As \cite{caffarelli2015}, $\phi$ is assumed to satisfy that
$\phi$ is strictly convex in compact sets and $\phi = \psi + \eta$ near infinity, with $\psi(x)$ a cone and
\[
|\eta(x)| \leq a |x|^{-\epsilon}, \ |\nabla \eta (x)| \leq a |x|^{-(1+\epsilon)}, \ |D^2 \eta (x)| \leq a |x|^{-(2+\epsilon)}
\]
for some constant $a>0$ and $0<\epsilon <n$. In particular, as $|x| \rightarrow + \infty$,
\[
-(-\Delta)^s \eta (x) = O (|x|^{-(2s+\epsilon)})
\]
and
\[
a_1 |x|^{1-2s} \leq -(-\Delta)^s \psi (x) \leq a_2 |x|^{1-2s},
\]
where $a_1$ and $a_2$ are some positive constants depending on the strict
convexity of the section of $\Gamma$. $\phi$ is normalized such that $\phi(0)=0$ and $\nabla \phi(0)=0$.

Using same arguments as in \cite{caffarelli2015} (see \cite{wu2019} also), we have the following existence theorem.
\begin{theorem}
\label{GJ-thm3}
Assume $\phi$ is semi-concave (The reader is referred to Definition \ref{semi-concavity} for the definition of semi-concavity) and Lipschitz continuous, then
there exists a unique viscosity solution to \eqref{1}. Furthermore,
the solution is Lipschitz continuous and semi-concave with the same constants as $\phi$.
\end{theorem}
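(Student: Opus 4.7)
The plan is to follow the Perron-method strategy of \cite{caffarelli2015}, exploiting the fact that the infimum representation \eqref{GJ-4} makes $F_s$ translation invariant in $x$, invariant under addition of constants to $u$, and concave as a function of $u$. First I would establish a comparison principle for \eqref{1}: if $u$ is an upper-semicontinuous viscosity subsolution and $v$ a lower-semicontinuous viscosity supersolution of \eqref{1} with $u(x)-v(x)\to 0$ as $|x|\to\infty$, then $u\leq v$. This is a doubling-of-variables argument adapted to nonlocal operators; the strict monotonicity in $w$ of the right-hand side $w-\phi$ is what makes the argument close, so the proof from \cite{caffarelli2015} transfers with no essential change. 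Uniqueness in Theorem \ref{GJ-thm3} then follows immediately.

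For existence, I would construct global barriers from the decomposition $\phi = \psi + \eta$ near infinity. The asymptotics $a_1|x|^{1-2s} \leq -(-\Delta)^s \psi(x) \leq a_2 |x|^{1-2s}$ and $-(-\Delta)^s \eta(x) = O(|x|^{-(2s+\epsilon)})$ recalled in the excerpt imply, for $C$ large enough, that $\phi + C|x|^{-\epsilon}$ and $\phi - C|x|^{-\epsilon}$ act as a super- and a subsolution of \eqref{1} outside a large ball; glued with the strictly convex data on compact sets, these produce two globally defined barriers, both asymptotic to $\phi$ at infinity. Perron's method applied to the family of subsolutions lying between them then delivers a viscosity solution $u$ to \eqref{1}.

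To transfer the regularity of $\phi$ to $u$, I would compare $u$ with translated and averaged copies of itself. Write $L := \mathrm{Lip}(\phi)$ and let $C$ denote the semi-concavity constant of $\phi$. For each $h \in \mathbb{R}^n$, translation invariance of $F_s$ and the fact that $F_s$ ignores additive constants make
\[
u_h(x) := u(x+h) - L|h|
\]
a viscosity subsolution of \eqref{1}, since $\phi(x+h) - L|h| \leq \phi(x)$; comparison with $u$ gives the one-sided Lipschitz bound, and the reverse follows by symmetry. For semi-concavity, the concavity of $F_s$ in $u$ (as an infimum of linear nonlocal operators) implies that
\[
w(x) := \tfrac12\bigl(u(x+h) + u(x-h)\bigr) - \tfrac{C}{2}|h|^2
\]
is a viscosity subsolution of \eqref{1}, because $\tfrac12\bigl(\phi(x+h)+\phi(x-h)\bigr) - \tfrac{C}{2}|h|^2 \leq \phi(x)$ by semi-concavity of $\phi$; comparison with $u$ then yields semi-concavity of $u$ with the same constant $C$.

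The main obstacle is the comparison principle in step one: the singular kernel of order $n+2s$, combined with the infimum over the unbounded family $\mathcal{M}$, forces careful control of the far-field tails in the doubling-of-variables argument, and requires checking that one may legitimately test the infimum representation \eqref{GJ-4} on sup- and inf-convolutions. Once this technical point is settled, steps two and three are structural consequences of concavity, translation invariance, and comparison, and they follow the pattern of \cite{caffarelli2015} closely.
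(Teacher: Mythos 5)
Your sketch faithfully reproduces the Perron-method argument of Caffarelli--Charro \cite{caffarelli2015}, which is exactly what this paper invokes (by citation, without spelling out details) to obtain Theorem \ref{GJ-thm3}: comparison by doubling of variables, global barriers built from the $\psi+\eta$ decomposition with the $|x|^{-\epsilon}$ correction, and transfer of Lipschitz and semi-concavity to $u$ via translation invariance and concavity of the infimum together with the comparison principle. The proposal is correct and essentially identical in approach to the cited proof.
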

Typical examples satisfying \eqref{f1}-\eqref{f4} are given by $f = \sigma_k^{1/k}$ and $f = (\sigma_k/\sigma_l)^{1/(k-l)}$,
$1\leq l < k \leq n$ defined in the cone
\[
\Gamma_k = \{\lambda \in \mathbb{R}^n: \sigma_j (\lambda) > 0, j =1, \ldots, k\},
\]
where $\sigma_k(\lambda)$ are the elementary symmetric functions
\[
\sigma_k (\lambda) = \sum_{1\leq i_1 < \cdots < i_k \leq n} \lambda_{i_1} \cdots \lambda_{i_k}, \ k= 1, \ldots, n.
\]
In particular, the operator \eqref{GJ-4} coincides with the one defined in \cite{wu2019} as $f = \sigma_k^{1/k}$.

To obtain higher regularity, we usually need to use the nonlocal Evans-Krylov theory developed in \cite{caffarelli2009}
and \cite{caffarelli2011}. For this purpose, we have to prove the strict ellipticity of \eqref{GJ-4}.
For fractional operator \eqref{GJ-4}, the strictly elliptic operator is defined as follows.
\[
\begin{aligned}
   F_{s}^{\theta}[u](x) = \,& \inf_{M \in \mathcal{M}}\Big\{\text{P.V.} \int_{\mathbb{R}^{n}} \frac{u(x+y)-u(x)}{|\sqrt{M}^{-1} y|^{n+2s}} \det \sqrt{M}^{-1} \, dy \,\big|\, \lambda_{\min}(M) \geq \theta\Big\}\\
    = \,& \inf_{M \in \mathcal{M}} \Big\{ \frac{1}{2} \int_{\mathbb{R}^n} \frac{\delta (u,x,y)}{|\sqrt{M}^{-1}y|^{n+2s}} \det \sqrt{M}^{-1} \, dy \,\big|\, \lambda_{\min}(M) \geq \theta \Big\},
\end{aligned}
\]
where $\theta$ is a positive constant. The main results of the current work are contained in the following theorem.
\begin{theorem}
\label{GJ-thm2}
Suppose $f = \sigma_k^{1/k}$, for $2\leq k \leq n$. Assume that \(\frac{1}{2} < s < 1\) and \(u\) is convex, Lipschitz continuous, semi-concave, and
\begin{equation}
\label{GJ-7}
(1-s) F_{s}[u](x) \geq \eta_{0} > 0, \quad \forall x \in \Omega
\end{equation}
in the viscosity sense for some constant $\eta_0 > 0$ and $\Omega \subset \mathbb{R}^n$.
Then there exists a positive constant $\theta$ depending only on $n$, $k$, $\eta_0$, $s$ and the Lipschitz and semi-concavity constants of $u$ such that
\begin{equation}
\label{GJ-15}
F_{s}[u](x) = F_{s}^{\theta}[u](x), \quad \forall x \in \Omega.
\end{equation}
Furthermore, Theorem \ref{GJ-thm2} is stable as $s \to 1$, i.e., the constant $\theta$ will not go to $0$ as $s \to 1$.
\end{theorem}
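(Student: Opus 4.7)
The plan is to sandwich $(1-s)F_{s}[u](x)$ between a uniform upper bound $C_{1}$ and a lower bound $G(\lambda_{\min}(M))$ on the competitors
\[
I_{M}[u](x):=\frac{1}{2}\int\frac{\delta(u,x,\sqrt{M}y)}{|y|^{n+2s}}\,dy,
\]
where $G(\mu)\to\infty$ as $\mu\to 0$, uniformly in $s\in(\tfrac12,1)$. Once both bounds are in hand, choosing $\theta$ with $G(\theta)>C_{1}$ forces any $M\in\mathcal{M}$ with $\lambda_{\min}(M)<\theta$ to satisfy $(1-s)I_{M}[u](x)>(1-s)F_{s}[u](x)$, so such $M$ are strictly non-optimal; this gives $F_{s}[u](x)=F_{s}^{\theta}[u](x)$ on $\Omega$, and the uniformity of $C_{1}$ and $G$ in $s$ yields the stability statement.

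For the upper bound I would use the distinguished trial matrix coming from $T=I\in\Gamma_{k}$: the matrix $M_{0}:=\nabla F(I)=c_{\star}I$ lies in $\mathcal{M}$, so $F_{s}[u](x)\leq I_{M_{0}}[u](x)=c_{\star}^{s}c_{n,s}^{-1}(-(-\Delta)^{s}u)(x)$. Lipschitz continuity and semi-concavity of $u$ yield $\delta(u,x,y)\leq C\min(|y|^{2},|y|)$, which combined with $c_{n,s}\sim C(n)(1-s)$ as $s\to 1$ produces a bound $(1-s)F_{s}[u](x)\leq C_{1}$ depending only on $n,k$, and the Lipschitz and semi-concavity constants of $u$.

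The heart of the argument is the matching lower bound. The geometric ingredient is the structure of $\mathcal{M}$ for $f=\sigma_{k}^{1/k}$: writing $M=\{f_{i}(\tau)\}$ with $\tau\in\Gamma_{k}$ and $f_{i}=\sigma_{k-1;i}(\tau)/(k\sigma_{k}^{(k-1)/k}(\tau))$, smallness of some $f_{i_{0}}(\tau)=\mu$ forces $\tau_{i_{0}}$ to dominate the remaining $\tau_{j}$'s (once normalized by $\sigma_{k}(\tau)=1$), and the Newton--MacLaurin/product identities for the $f_{i}$'s then force $\lambda_{\max}(M)\geq c\,\mu^{-\alpha}$ for some $\alpha=\alpha(n,k)>0$ (most transparent in the Monge--Amp\`ere case $k=n$, where $\det M=n^{-n}$ directly yields $\lambda_{\max}(M)\geq c\,\mu^{-1/(n-1)}$). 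Restricting the $y$-integral to a narrow cone around the eigenvector $v_{\max}$ of $\lambda_{\max}(M)$, the argument of $\delta$ has typical size $\sqrt{\lambda_{\max}(M)}$. Global convexity of $u$ gives $\delta\geq 0$ everywhere, while the viscosity hypothesis $(1-s)F_{s}[u]\geq\eta_{0}$---lifted via Theorem \ref{GJ-thm1} and the classical strict ellipticity of $\sigma_{k}^{1/k}$ on convex matrices bounded above to a quantitative directional convexity $\delta(u,x,tv)\geq c_{0}t$ in every direction $v$, for $t$ in a controlled range---produces the divergent $G(\mu)$.

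The principal obstacle is precisely the last step: converting the integral hypothesis $(1-s)F_{s}[u]\geq\eta_{0}$ into a quantitative, pointwise, directional lower bound on $\delta(u,x,\cdot)$ that is uniform in $s\in(\tfrac12,1)$ and survives the regime $\mu\to 0$. For $k=2$ this can be done without assuming convexity of $u$, as in \cite{wu2019}, essentially because $\sigma_{2}^{1/2}$ enjoys a uniform Hessian structure; for $k>2$ the global convexity of $u$ is essential, which accounts for its presence in the hypotheses. The most efficient route is expected to be a compactness/contradiction argument on sequences $M_{j}\in\mathcal{M}$ with $\lambda_{\min}(M_{j})\to 0$ approximately realizing the infimum at points $x_{j}\to x^{*}\in\overline{\Omega}$, in which Theorem \ref{GJ-thm1} passes to the $s\to 1$ limit and invokes classical strict ellipticity of $\sigma_{k}^{1/k}$ to derive the contradiction; the resulting $\theta$ then reduces in the limit $s\to 1$ to the classical strict-ellipticity constant on convex matrices with $\sigma_{k}^{1/k}\geq 4\eta_{0}/\omega_{n}$ and bounded above by the semi-concavity constant of $u$.
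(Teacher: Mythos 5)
Your proposal correctly identifies the overall architecture (uniform upper bound via Lipschitz/semi-concavity, lower bound that blows up as $\lambda_{\min}(M)\to 0$, and the role of convexity in guaranteeing $\delta(u,x,\cdot)\geq 0$), but the crux of your lower bound does not work, and you acknowledge as much. The paper's argument is genuinely different at exactly the point you flag as the ``principal obstacle.''

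The claimed ``quantitative directional convexity $\delta(u,x,tv)\geq c_{0}t$ in every direction $v$'' is false for $2\leq k<n$ even under all the stated hypotheses. For instance (modifying near infinity so as to be asymptotically linear), $u(x)=\tfrac12(x_{1}^{2}+\cdots+x_{k}^{2})$ is convex, semi-concave, Lipschitz on compacts, with $\sigma_{k}^{1/k}(D^{2}u)=1$, yet $\delta(u,0,te_{n})\equiv 0$ for small $t$; degenerate directions are unavoidable when $k<n$. Hence the classical strict ellipticity of $\sigma_{k}^{1/k}$ \emph{together with} semi-concavity does \emph{not} pin down $D^{2}u$ from below, except in the Monge--Amp\`ere case $k=n$ where $\det D^{2}u\geq\eta$ and $D^{2}u\leq CI$ do force $\lambda_{\min}(D^{2}u)\geq c$. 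Your ``narrow cone around the eigenvector of $\lambda_{\max}(M)$'' is therefore too small a set of directions to integrate over; it may be a degenerate direction of $u$ and then contributes nothing divergent. Likewise, your compactness route based on Theorem \ref{GJ-thm1} requires $s\to 1$, so it cannot produce a $\theta>0$ at a fixed $s\in(\tfrac12,1)$, which is what \eqref{GJ-15} asserts.

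The paper's key substitute for your directional bound is Lemma \ref{GJ-Lem1}: the hypothesis $(1-s)F_{s}[u]\geq\eta_{0}$ yields a \emph{uniform integral lower bound over every $(n-k+1)$-dimensional slice},
\[
(1-s)\int_{\mathbb{R}^{n-k+1}}\frac{u(y_{1}e_{1}+\cdots+y_{n-k+1}e_{n-k+1})-u(0)}{|\bar{y}|^{n-k+1+2s}}\,d\bar{y}\ \geq\ \mu_{0}>0,
\]
proved not by passing to $s\to 1$ but by plugging a one-parameter family of nearly degenerate competitors $M=M(\epsilon)\in\mathcal{M}_{k}$ (coming from $B=\mathrm{diag}(0,\ldots,0,\epsilon^{k-1},\epsilon^{-1},\ldots,\epsilon^{-1})\in\Gamma_{k}$) into the defining infimum of $F_{s}[u]$. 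This is a far weaker and more robust statement than pointwise directional convexity, and it is exactly tailored to the second algebraic fact you are missing: Theorem~1 of Lin--Trudinger \cite{LT94} guarantees that when $M=\{\sigma_{k}^{1/k}\}^{ij}(\eta)$ with $\lambda_{\min}(M)=\epsilon$ small, the $n-k+1$ \emph{largest} eigenvalues of $M$ are all comparable to $\lambda_{\max}(M)\gtrsim\epsilon^{-1/(n-1)}$ (your $\lambda_{\max}$ bound alone, from $\det M\geq c_{0}$, is also used but is not enough). Splitting $\mathbb{R}^{n}$ into that $(n-k+1)$-plane and its complement, convexity kills the complementary part ($I_{1}'\geq 0$) and Lemma \ref{GJ-Lem1} makes the slice part diverge like $\epsilon^{-s/(n-1)}/(1-s)$, which beats the uniform upper bound and fixes $\theta$ with the claimed dependence and stability as $s\to 1$. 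You will want to replace your ``lifting to directional convexity'' step with this slice-integral lemma and bring in the Lin--Trudinger comparability of the top $n-k+1$ eigenvalues of $M$.
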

By Theorem \ref{GJ-thm2}, Proposition 6.3 in \cite{caffarelli2015} and nonlocal Evans-Krylov theorem proved in \cite{caffarelli2009} and \cite{caffarelli2011},
we obtain the $C^{2s+\alpha}$ estimates for convex solutions to \eqref{1} with $f = \sigma_k^{1/k}$ and then if the solutions are convex and then are classical.

The assumption that $u$ is convex is expected to be removed. It is
only used to show \eqref{GJ-14}. In \cite{wu2019}, Wu proved Theorem \ref{GJ-thm2} for $k=2$ without the condition that $u$ is convex as follows.
\begin{theorem}
\label{GJ-thm4}
Suppose $f = \sigma_2^{1/2}$. Assume that \(\frac{1}{2} < s < 1\) and \(u\) is Lipschitz continuous, semi-concave, and satisfies \eqref{GJ-7}.
Then \eqref{GJ-15} holds.
\end{theorem}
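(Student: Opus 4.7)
The plan is to argue by contradiction: assume the conclusion fails, so there is a minimizing sequence $M_j \in \mathcal{M}$ with $\lambda_{\min}(M_j) \to 0$ and $I(M_j) \to F_s[u](x)$, where
\[
I(M) := \int_{\mathbb{R}^n}\frac{\delta(u,x,y)}{|\sqrt{M}^{-1}y|^{n+2s}}\det\sqrt{M}^{-1}\,dy.
\]
I will show that $(1-s)I(M_j)$ must blow up, contradicting the finiteness of $F_s[u](x)$.

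After the change of variables $y = \sqrt{M}z$, $I(M) = \int_{\mathbb{R}^n}\delta(u,x,\sqrt{M}z)|z|^{-(n+2s)}\,dz$. Working in the eigenbasis of $M$ and splitting $z = (z_1, \bar{z})$ with $z_1$ along the $\mu_1$-eigendirection (with $\mu_1 := \lambda_{\min}(M)$), the Lipschitz bound $|\delta(u,x,y)| \leq 2L|y|$ and the semi-concavity bound $\delta(u,x,y) \leq K|y|^2$ justify performing the $z_1$-integral by dominated convergence. As $\mu_1 \to 0$, one obtains
\[
I(M) = C_{n,s}\int_{\mathbb{R}^{n-1}}\frac{\delta(u,x,(0,\bar{y}))}{|\sqrt{\bar{M}}^{-1}\bar{y}|^{n-1+2s}}\det\sqrt{\bar{M}}^{-1}\,d\bar{y} + o(1) =: C_{n,s}\,I_{n-1}(\bar{M}) + o(1),
\]
where $\bar{M}$ is the restriction of $M$ to the hyperplane orthogonal to the $\mu_1$-eigendirection and $I_{n-1}$ is the $(n-1)$-dimensional analogue of $I$.

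The critical input specific to $k = 2$ is the algebraic identity
\[
\sum_i \lambda_i(M)^2 = \frac{\sigma_1(M)^2}{n-1}-\frac{1}{2},
\]
obtained by direct computation from $M = (\sigma_1(T)I - T)/(2\sqrt{\sigma_2(T)})$ with $T \in \Gamma_2$. Together with the Cauchy-Schwarz inequality it forces $\max_{i\geq 2}\lambda_i(\bar{M}) \gtrsim (n-1)/(4\mu_1)$ as $\mu_1 \to 0$, so the kernel in $I_{n-1}$ concentrates at the origin of the hyperplane and $I_{n-1}(\bar{M})$ grows at the rate $\mu_1^{-s}$. Combined with the hypothesis $(1-s)F_s[u](x) \geq \eta_0 > 0$, continuity in the eigendirection, and compactness of the unit sphere, this produces a uniform positive lower bound such that $(1-s)I(M) \geq c\,\mu_1^{-s}$ for some $c > 0$ depending only on $n$, $s$, $\eta_0$, and on the Lipschitz and semi-concavity constants of $u$. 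This contradicts $I(M_j) \to F_s[u](x) < \infty$, so $F_s[u](x) = F_s^\theta[u](x)$ for some $\theta > 0$. Tracking the $s$-dependence through the estimate shows the resulting $\theta$ does not collapse as $s \to 1$.

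The main obstacle is establishing the uniform positive lower bound on the lower-dimensional fractional integral along arbitrary hyperplanes without assuming convexity of $u$. For $k = 2$, the Garding-cone property of $\Gamma_2$---that the restriction of $T \in \Gamma_2$ to any codimension-one subspace lies in $\Gamma_1$---transfers the $n$-dimensional hypothesis on $F_s[u]$ to the lower-dimensional operator. This restriction compatibility breaks for $k \geq 3$, which is why Theorem~\ref{GJ-thm2} in that range requires convexity of $u$.
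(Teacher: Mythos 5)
Your overall strategy---contradiction via showing that the integral blows up when $\lambda_{\min}(M)\to 0$, and in particular that the essential contribution comes from an $(n-1)$-dimensional hyperplane orthogonal to the degenerate direction---is the same as the paper's. The algebraic identity
\[
\sum_i \lambda_i(M)^2 = \frac{\sigma_1(M)^2}{n-1}-\frac{1}{2}
\]
for $M=\nabla\sigma_2^{1/2}(T)$ is a correct and genuinely nice observation specific to $k=2$; it yields $\sum_{i\geq 2}\lambda_i(M)\gtrsim 1/\lambda_{\min}(M)$, which is a sharper trace blow-up than the bound $f_1\geq c_1\epsilon^{-1/(n-1)}$ the paper extracts from $\prod_j f_j\geq c_0$. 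However, the crucial step of your argument---turning this into a lower bound $I_{n-1}(\bar M)\gtrsim \mu_1^{-s}/(1-s)$---is not actually established, and this is where the paper does real work that your sketch omits.

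Two concrete gaps. First, knowing that the trace (or largest eigenvalue) of $\bar M$ is $\gtrsim 1/\mu_1$ does not control $I_{n-1}(\bar M)$, because the anisotropic kernel $|\sqrt{\bar M}^{-1}\bar y|^{-(n-1+2s)}\det\sqrt{\bar M}^{-1}$ depends on the full spectrum of $\bar M$. What one actually needs is that the \emph{second-smallest} eigenvalue of $M$ (equivalently, all eigenvalues of $\bar M$) is $\gtrsim 1/\mu_1$. The paper obtains this comparability from Theorem~1 of Lin--Trudinger \cite{LT94}, which gives $f_i\geq\theta f_1$ for $i\leq n-k+1$; your Cauchy--Schwarz step gives only the trace bound, not the individual comparability. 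Second, even once the eigenvalue comparability is in hand, the resulting lower bound on $I_{n-1}(\bar M)$ hinges on a quantitative lower bound for the isotropic $(n-1)$-dimensional integral
\[
\int_{\mathbb{R}^{n-1}}\frac{u(\bar y)-u(0)}{|\bar y|^{n-1+2s}}\,d\bar y\ \geq\ \frac{\mu_0}{1-s}
\]
uniformly over all hyperplanes (the paper's Lemma~\ref{GJ-Lem1}). Your appeal to ``continuity in the eigendirection, compactness of the unit sphere, and the G\aa{}rding-cone restriction property'' does not supply this; the restriction property of $\Gamma_2$ is true but plays no role in how the paper extracts the lower-dimensional bound---that comes from plugging a carefully chosen degenerate $T\in\Gamma_2$ into the hypothesis $(1-s)F_s[u]\geq\eta_0$ and estimating the extra directions using Lipschitz and semi-concavity. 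You would need to either reproduce Lemma~\ref{GJ-Lem1} or cite it explicitly. Finally, your ``$o(1)$'' error claim $I(M)=C_{n,s}I_{n-1}(\bar M)+o(1)$ is plausible but is asserted rather than proved; the paper instead bounds the corresponding term $I_1'$ explicitly by $\tilde C_1\epsilon^s/(1-s)$ (equation~\eqref{GJ-16}), using the one-dimensional structure of the $z'$-integral that is special to $k=2$. As written, your sketch reproduces the paper's skeleton but leaves unverified the two quantitative lemmas on which the contradiction actually rests.
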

In the current work, we will provide an alternative proof of Theorem \ref{GJ-thm4} in Section 4. It is of interest to remove the convexity condition for $3 \leq k \leq n-1$.

We remark that when $f= \sigma_n^{1/n}$, \eqref{GJ-1} is the Monge-Amp\`{e}re operator and \eqref{GJ-4} becomes the
fractional Monge-Amp\`{e}re operator defined in \cite{caffarelli2015}. Another class of interesting examples of $f$ are
\[
\tilde{f} (\lambda) = f \Big(\sum_{j\neq 1} \lambda_j, \ldots, \sum_{j\neq n} \lambda_j\Big),
\]
where $f$ satisfies \eqref{f1}-\eqref{f4}. In particular, as $f = \sigma_n^{1/n}$, $\tilde{f}$ is called the $(n-1)$ Monge-Amp\`{e}re
operator.

Classical Hessian equations, especially the Monge-Amp\`{e}re equation, appear in many geometric problems as well as the optimal transportation problem.
The Dirichlet problem for Hessian equations was first studied by Ivochkina \cite{Ivochkina81} and Caffarelli-Nirenberg-Spruck \cite{caffarelli1985},
followed by work in \cite{Guan94, Trudinger95, Wang1994}, etc. Guan \cite{Guan23} proved the existence of smooth solutions to the Dirichlet problem for
Hessian equations only under conditions \eqref{f1}-\eqref{f3} and that there exists a subsolution.

Fractional Monge-Amp\`{e}re operator was introduced by Caffarelli-Charro \cite{caffarelli2015}. Later, Caffarelli-Silvestre \cite{caffarelli2016} defined another nonlocal fractional Monge-Amp\`{e}re operator
\[
  MA_s u(x) = c_{n,s}\inf_{K \in \mathcal{K}_n^s} \left\{ \int_{\mathbb{R}^n} (u(y) - u(x) - y \cdot \nabla u(x)) K(y) \, dy  \right\}
\]
by observing that
\[
(\det(D^2 u (x)))^{1/n} = \inf\{\Delta[\tilde{u}\circ\varphi] (0): \mbox{ for all $\varphi$ measure preserving s.t. }\varphi(0)=0\}.
\]
Here
\[
\tilde{u} (y) =  u(x+y) - u(x) - y \cdot \nabla u(x)
\]
and
\[
  \mathcal{K}_n^s = \left\{ K : \mathbb{R}^n \to \mathbb{R}_+ : \left| \{ x \in \mathbb{R}^n : K(x) > r^{-n - 2s} \} \right| = |B_r| \text{ for all } r > 0 \right\}.
\]
Their definition was further generalized by Caffarelli and Soria-Carro \cite{caffarelli2024} recently. A different nonlocal version of the Monge-Amp\`{e}re
operator was also considered in \cite{GuillenSchwab2012}.
Besides, fractional $k$-Hessian operators (the cases $f = \sigma_k^{1/k}$) were introduced and studied by
Wu \cite{wu2019}. In view of \eqref{GJ-3}, the Hessian operators \eqref{GJ-1} can be regarded as a class of degenerate Bellman operators.
It is interesting to consider general fractional Bellman operators.

The rest of this paper is organized as follows. In Section 2, we provide the notations of this paper and some preliminaries.
Section 3 is devoted to proving Theorem \ref{GJ-thm1}. In Section 4, we consider the strict ellipticity of \eqref{GJ-4} and prove
Theorem \ref{GJ-thm2} and \ref{GJ-thm4}.

\section{Preliminaries}

In this section we introduce the notations of this paper and recall some basic results and definitions.

For square matrices, \( M > 0 \) means positive definite and \( M \geq 0 \) positive semidefinite.
\( \lambda_{\min}(M) \) and \( \lambda_{\max}(M) \) denote the smallest and largest eigenvalues of $M$, respectively.

We shall denote the \( r \)-th-dimensional ball of radius 1 and center 0 by \( B_1^r(0) = \{ x \in \mathbb{R}^r : |x| \leq 1 \} \) and the corresponding \( (r - 1) \)-th-dimensional sphere by \( \partial B_1^r(0) = \{ x \in \mathbb{R}^r : |x| = 1 \} \). Whenever \( r \) is clear from context, we shall simply write \( B_1(0) \) and \( \partial B_1(0) \). \( \mathcal{H}^r \) stands for the \( r \)-dimensional Haussdorff measure. We shall denote
$\omega_r = |B_1^r(0)| .$
\begin{definition}
\label{semi-concavity}
Let \( A \subset \mathbb{R}^n \) be an open set. We say that a function \( u : A \to \mathbb{R} \) is semiconcave if it is continuous in \( A \) and there exists \( SC \geq 0 \) such that \( \delta(u, x, y) \leq SC|y|^2 \) for all \( x, y \in \mathbb{R}^n \) such that \( [x - y, x + y] \subset A \). The constant \( SC \) is called a semiconcavity constant for \( u \) in \( A \).

Alternatively, a function \( u \) is semiconcave in \( A \) with constant \( SC \) if \( u(x) - \frac{SC}{2}|x|^2 \) is concave in \( A \). Geometrically, this means that the graph of \( u \) can be touched from above at every point by a paraboloid of the type \( a + \langle b, x \rangle + \frac{SC}{2}|x|^2 \).
\end{definition}
Next, we give the definition of non-smooth admissible functions (in viscosity sense).
\begin{definition}
\label{admissible}
An upper semicontinuous function \( u: \Omega \rightarrow \mathbb{R} \) is called admissible in $\Omega \subset \mathbb{R}^n$ if $\lambda (D^2 \psi (x)) \in \Gamma$ for all $x \in \Omega$ and $\psi \in C^2 (N)$ satisfying $u-\psi$ has a local maximum at $x$, where $N$ is some open neighborhood of $x$ in $\Omega$.

It is easy to find $u \in C^2 (\Omega)$ is admissible in $\Omega$ if and only if $\lambda (D^2 u (x)) \in \Gamma$ for all $x \in \Omega$.
\end{definition}
\begin{remark}
An upper semicontinuous function \( u: \Omega \rightarrow \mathbb{R} \) is called $k$-convex if $u$ satisfies the conditions in Definition \ref{admissible} with the cone $\Gamma$ replaced by $\Gamma_k$.
\end{remark}

\begin{proposition}
	
Assume that \( 1/2 < s < 1 \). If $u$ is Lipschitz continuous and semi-concave, then $F_{s}[u](x)$ is well-defined.

\end{proposition}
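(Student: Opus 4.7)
The plan is to show that for each fixed $M \in \mathcal{M}$ the integral
\[
I_M(x) := \frac{1}{2}\int_{\mathbb{R}^n} \frac{\delta(u,x,y)}{|\sqrt{M}^{-1}y|^{n+2s}} \det\sqrt{M}^{-1}\, dy
\]
is unambiguously defined as a value in $[-\infty,+\infty)$, whence the infimum $F_s[u](x)=\inf_{M\in\mathcal{M}} I_M(x)$ inherits the same property. Because each $M$ is positive definite, the kernel $|\sqrt{M}^{-1}y|^{-(n+2s)}$ is pointwise comparable to $|y|^{-(n+2s)}$ with constants depending on the spectrum of $M$, so up to such a constant the question reduces to integrability of $\delta(u,x,y)/|y|^{n+2s}$ against Lebesgue measure.

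I would then split this into the regions $\{|y|\ge 1\}$ and $\{|y|\le 1\}$. On the outer region, Lipschitz continuity of $u$ with constant $L$ gives the absolute bound $|\delta(u,x,y)|\leq 2L|y|$, reducing the tail to a multiple of $\int_1^\infty \rho^{-2s}\,d\rho$, which is finite because $s>\tfrac12$. On the inner region, the one-sided bound $\delta(u,x,y)\leq SC|y|^2$ coming from semi-concavity (obtained by touching $u$ from above at $x$ by a paraboloid $a+\langle b,\cdot\rangle+\tfrac{SC}{2}|\cdot|^2$ and adding the resulting inequalities for $x\pm y$) controls the \emph{positive} part of the integrand: $\delta^+/|y|^{n+2s}\leq SC|y|^{2-n-2s}$ reduces in polar coordinates to a multiple of $\int_0^1\rho^{1-2s}\,d\rho$, finite because $s<1$. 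Combining the two regimes, the positive part of the integrand is globally Lebesgue integrable, so $I_M(x)$ has an unambiguous value in $[-\infty,+\infty)$, and the infimum over $\mathcal{M}$ then lies in the same set.

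The main obstacle — and the reason the proposition hypothesizes semi-concavity rather than just Lipschitz — is that no matching lower bound $\delta(u,x,y)\geq -C|y|^2$ is available near the origin from Lipschitz alone; the naive estimate $\delta\geq -2L|y|$ fails to be integrable against $|y|^{-n-2s}$ once $s>\tfrac12$, and simple examples such as $u(x)=-|x|$ make $I_M(x)=-\infty$ at non-smooth points. Thus the content of the proposition is not finiteness but unambiguity: the role of the semi-concavity hypothesis is precisely to make $\delta^+/|y|^{n+2s}$ integrable and thereby exclude the indeterminate form $+\infty-\infty$, so that $I_M(x)$ and $F_s[u](x)$ are well-defined as extended-real numbers in $[-\infty,+\infty)$.
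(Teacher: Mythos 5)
Your proposal is correct and takes essentially the same approach as the paper: Lipschitz continuity controls the tail $\{|y|\geq 1\}$ (using $s>\tfrac12$), semi-concavity controls the positive part near the origin (using $s<1$), and the infimum over $\mathcal{M}$ is then well-defined. The paper handles the kernel via the substitution $y\mapsto\sqrt{M}y$ rather than your two-sided comparability estimate, and it leaves the ``value in $[-\infty,+\infty)$'' reading of well-definedness implicit, but these are cosmetic differences; your closing paragraph simply makes explicit the point the paper is relying on.
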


\begin{proof}
	
	For all $\rho>0 $,
\[
\begin{aligned}
		&\int_{\mathbb{R}^n} \frac{\delta(u, x, y)}{|\sqrt{M}^{-1} y|^{n + 2s}} \det \sqrt{M}^{-1} \, dy\\
		= & \int_{\mathbb{R}^n} \frac{\delta(u, x, \sqrt{M} y)}{|y|^{n + 2s}} \, dy \\
		= & \int_{B_{\rho}(0)} \frac{\delta(u, x, \sqrt{M} y)}{|y|^{n + 2s}} \, dy + \int_{\mathbb{R}^n \setminus B_{\rho}(0)} \frac{\delta(u, x, \sqrt{M} y)}{|y|^{n + 2s}} \, dy \\
		\leq & \int_{B_{\rho}(0)} \frac{C |\sqrt{M} y|^2}{|y|^{n + 2s}} \, dy + \int_{\mathbb{R}^n \setminus B_{\rho}(0)}  \frac{2L |\sqrt{M} y|}{|y|^{n + 2s}} \, dy \\
		\leq & \int_{B_{\rho}(0)} \frac{C \lambda_{\max}(M)}{|y|^{n + 2s - 2}} \, dy + \int_{\mathbb{R}^n \setminus B_{\rho}(0)} \frac{2L \lambda_{\max}(M)^{1/2}}{|y|^{n + 2s - 1}} \, dy
		< +\infty.
\end{aligned}
\]
Therefore, \( F_{s}[u](x) \) is well-defined.
	
\end{proof}

We recall from \cite{caffarelli2009} the definition of viscosity solutions.

\begin{definition}
	
	A function \( u : \mathbb{R}^n \to \mathbb{R} \), upper (resp. lower) semicontinuous in \( \overline{\Omega} \), is said to be a subsolution (supersolution) to \( \mathcal{F}_{s} u = f \), and we write \( \mathcal{F}_{s} u \geq f \) (resp. \( \mathcal{F}_{s} u \leq f \)), if every time all the following happen,
	\begin{itemize}
		\item \( x \) is a point in \( \Omega \),
		\item \( N \) is an open neighborhood of \( x \) in \( \Omega \),
		\item \( \psi \) is some \( C^2 \) function in \( \overline{N} \),
		\item \( \psi(x) = u(x) \),
		\item \( \psi(y) > u(y) \) (resp. \( \psi(y) < u(y) \)) for every \( y \in N \setminus \{x\} \),
	\end{itemize}
	and if we let
	\[
	v :=
	\begin{cases}
		\psi & \text{in } N \\
		u & \text{in } \mathbb{R}^n \setminus N,
	\end{cases}
	\]
	then we have \( \mathcal{F}_{s} [v](x) \geq f(x) \) (resp. \( \mathcal{F}_{s} [v](x) \leq f(x) \)). A solution is a function \( u \) that is both a subsolution and a supersolution.
	
\end{definition}

The following lemma states that \( \mathcal{F}_{s} u \) can be evaluated classically at those points \( x \) where \( u \) can be touched by a paraboloid. The details of the proof can be found in \cite{caffarelli2009}.

\begin{lemma}
	
	Let \( 1/2 < s < 1 \) and \( u : \mathbb{R}^n \to \mathbb{R} \) with asymptotically linear growth. If we have \( \mathcal{F}_{s} u \geq f \) in \( \mathbb{R}^n \) (resp. \( \mathcal{F}_{s} u \leq f \)) in the viscosity sense and \( \psi \) is a \( C^2 \) function that touches \( u \) from above (below) at a point \( x \), then \( \mathcal{F}_{s} [u](x) \) is defined in the classical sense and \( \mathcal{F}_{s} [u](x) \geq f(x) \) (resp. \( \mathcal{F}_{s} [u](x) \leq f(x) \)).
	
\end{lemma}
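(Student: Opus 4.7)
The plan is to follow the standard viscosity-to-classical transfer argument: one replaces $u$ with the smooth test function on a shrinking family of neighborhoods of $x$, applies the viscosity inequality to each replacement, and passes to the limit by monotone convergence. After perturbing $\psi$ by a small $C^2$ bump vanishing at $x$ if necessary, we may assume the touching is strict, $\psi(y) > u(y)$ for $y \in N \setminus \{x\}$, matching the hypothesis of the viscosity definition.

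For each $r > 0$ small enough that $B_r(x) \subset N$, define
\[
v_r(y) := \begin{cases} \psi(y), & y \in B_r(x), \\ u(y), & y \notin B_r(x). \end{cases}
\]
Since $v_r$ coincides with the $C^2$ function $\psi$ in a neighborhood of $x$ and inherits the asymptotic linearity of $u$ at infinity, the integral
\[
I(v_r, M) := \frac{1}{2}\int_{\mathbb{R}^n} \frac{\delta(v_r, x, y)}{|\sqrt{M}^{-1}y|^{n+2s}} \det \sqrt{M}^{-1} \, dy
\]
is absolutely convergent for each $M \in \mathcal{M}$, so $F_s[v_r](x)$ is classically defined, and the viscosity subsolution condition applied with test $\psi$ yields $F_s[v_r](x) \geq f(x)$.

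The key step is monotonicity: as $r \downarrow 0$, the functions $v_r$ decrease pointwise (off $\{x\}$) to $u$, so $\delta(v_r, x, y) \downarrow \delta(u, x, y)$ for every $y$. For a fixed reference radius $r_0$, the dominating integrand $\delta(v_{r_0}, x, y) |\sqrt{M}^{-1}y|^{-n-2s} \det \sqrt{M}^{-1}$ lies in $L^1$ (using $\delta(\psi, x, y) = O(|y|^2)$ near the origin and the asymptotic linearity of $u$ at infinity), so monotone convergence yields $I(v_r, M) \to I(u, M)$ for each $M \in \mathcal{M}$. The upper bound $\delta(u, x, y) \leq \delta(\psi, x, y)$ near the origin forces $I(u, M) > -\infty$, so $F_s[u](x)$ is classically defined. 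Since $v_r \geq u$ everywhere, one has $F_s[v_r](x) \geq F_s[u](x)$; conversely, for each $M$, $F_s[v_r](x) \leq I(v_r, M) \to I(u, M)$, and taking the infimum over $M$ gives $\lim_{r \to 0} F_s[v_r](x) \leq F_s[u](x)$. Hence $\lim_{r \to 0} F_s[v_r](x) = F_s[u](x)$, and the viscosity inequality $F_s[v_r](x) \geq f(x)$ passes to $F_s[u](x) \geq f(x)$.

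The main obstacle lies in the monotone-convergence step: one must carry it out pointwise in $M$ and only afterwards pass to the infimum, since the kernels $|\sqrt{M}^{-1}y|^{-n-2s} \det \sqrt{M}^{-1}$ are anisotropic and $\mathcal{M}$ may include matrices with small eigenvalues, precluding a uniform-in-$M$ estimate. One also needs enough regularity of $u$ at $x$ (continuity at $x$ together with control of $\psi - u$ near $x$) to guarantee that the dominating integrand is genuinely integrable; this is what the asymptotic-linearity and touching hypotheses are for. The supersolution case is entirely symmetric: $\psi$ touches $u$ from below, $v_r \leq u$ with $v_r \uparrow u$ as $r \downarrow 0$, monotone convergence gives $I(v_r, M) \uparrow I(u, M)$, and the viscosity inequality $F_s[v_r](x) \leq f(x)$ transfers to $F_s[u](x) \leq f(x)$.
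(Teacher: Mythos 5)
Your proposal follows the standard replacement-and-monotone-convergence argument, which is precisely the route the paper itself refers to (the paper defers the details to Caffarelli--Silvestre \cite{caffarelli2009}, where this is proved along these exact lines). The construction of $v_r$, the $r_0$-dominated monotone convergence carried out pointwise in $M$, and the final squeeze $\lim_{r\to 0} F_s[v_r](x) = F_s[u](x)$ are all in order, as is the remark on the need for strict touching to invoke the paper's viscosity definition.

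One slip worth flagging: the sentence claiming that the upper bound $\delta(u,x,y) \leq \delta(\psi,x,y)$ near the origin \emph{forces} $I(u,M) > -\infty$ is not correct as a standalone statement. That upper bound, combined with asymptotic linearity at infinity, controls only the positive part of the integrand $\delta(u,x,y)\,|\sqrt{M}^{-1}y|^{-n-2s}$; it yields $I(u,M) < +\infty$, so that $I(u,M)$ is a well-defined Lebesgue integral with value in $[-\infty, +\infty)$, but it does not by itself rule out $I(u,M) = -\infty$. The lower bound, and hence finiteness of $F_s[u](x)$, comes from the limiting argument you give at the end of the paragraph: for every $M$ and $r$, $I(v_r,M) \geq F_s[v_r](x) \geq f(x)$, and passing $r \to 0$ gives $I(u,M) \geq f(x) > -\infty$. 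So the needed conclusion is in fact established by your proof, just not by the reason stated at that point; the justification should be moved so that finiteness is deduced from the viscosity inequality rather than from the one-sided bound on $\delta(u)$.
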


\section{Stability of fractional Hessian operators}

In this section, we use similar ideas of \cite{caffarelli2015} to prove Theorem \ref{GJ-thm1}.

\begin{proof}[Proof of Theorem \ref{GJ-thm1}.]
We first consider the case $u \in C^2$.	
By \eqref{GJ-3}, we have
\[
F\left(D^{2} u(x)\right) = \inf _{M \in \mathcal{M}} \left\{ \text{trace}\left(M D^{2} u(x)\right) \right\}
\]
provided $u\in C^2$ and \(\lambda(D^{2} u(x)) \in \Gamma\).
Thus, it suffices to prove:
\[
\lim _{s \to 1} (1-s) F_{s}[u](x) = \frac{w_{n}}{4} \inf _{M \in \mathcal{M}} \left\{ \text{trace}\left(M D^{2} u(x)\right) \right\}.
\]

For suitable \(0 \leq \rho \leq R\) and fixed $x$, we have
\begin{equation}\label{4}
\begin{aligned}
	& \int_{\mathbb{R}^{n}} \frac{\delta(u, x, y)}{\left|\sqrt{M}^{-1} y\right|^{n+2 s}} \det \sqrt{M}^{-1} \, dy\\
	= & \int_{\mathbb{R}^{n}} \frac{\delta(u, x, \sqrt{M} y)}{|y|^{n+2 s}} \, dy \\
	= & \int_{B_{\rho}(0)} \frac{\left\langle D^{2} u(x) \cdot \sqrt{M} y, \sqrt{M} y\right\rangle}{|y|^{n+2 s}} \, dy \\
	&+ \int_{B_{\rho}(0)} \frac{\delta(u, x, \sqrt{M} y) - \left\langle D^{2} u(x) \cdot \sqrt{M} y, \sqrt{M} y\right\rangle}{|y|^{n+2 s}} \, dy \\
	&+ \int_{B_{R}(0) \setminus B_{\rho}(0)} \frac{\delta(u, x, \sqrt{M} y)}{|y|^{n+2 s}} \, dy
	+ \int_{\mathbb{R}^{n} \setminus B_{R}(0)} \frac{\delta(u, x, \sqrt{M} y)}{|y|^{n+2 s}} \, dy.
\end{aligned}
\end{equation}

First, we estimate the right-hand side of the equation:
\begin{equation}\label{5}
\begin{aligned}
	& \int_{B_{\rho}(0)} \frac{\left\langle D^{2} u(x) \cdot \sqrt{M} y, \sqrt{M} y\right\rangle}{|y|^{n+2 s}} \, dy\\
	= & \int_{0}^{\rho} \int_{\partial B_{1}(0)} \frac{ \left\langle D^{2} u(x) \cdot \sqrt{M} y, \sqrt{M} y\right\rangle}{r^{2s-1}} \, dS_{y} dr \\
	= & \frac{\rho^{2-2 s}}{2-2 s} \int_{\partial B_{1}(0)} \left\langle D^{2} u(x) \cdot \sqrt{M} y, \sqrt{M} y\right\rangle \, dS_{y} \\
	= & \frac{\rho^{2-2 s}}{2-2 s} \int_{B_{1}(0)} \text{div}_y \left[\sqrt{M}^T D^{2} u(x) \sqrt{M}  y\right] \, dy \\
	= & \frac{\rho^{2-2 s}}{2-2 s} \omega_{n} \text{trace}\left(M D^{2} u(x)\right),
\end{aligned}
\end{equation}
where we have used the divergence theorem for the penultimate equality.

Since
\[
\delta(u, x, \sqrt{M} y) = \left\langle D^{2} u(x) \cdot \sqrt{M} y, \sqrt{M} y\right\rangle + o\left(\left|M y \cdot y^{T}\right|\right) \quad \text{as} \quad |y| \to 0,
\]
we have, for any fixed $\varepsilon$ small,
\[
\left| \delta(u, x, \sqrt{M} y) - \left\langle D^{2} u(x) \cdot \sqrt{M} y, \sqrt{M} y\right\rangle \right|
\leq \varepsilon \lambda_{\text{max}}(M) |y|^{2}
\]
provided $|y| < \rho$ and $0 < \rho \ll \frac{1}{\lambda_{\text{max}}^{1/2}(M)}$ is sufficiently small.
It follows that
\begin{equation}\label{6}
\begin{aligned}
	& \int_{B_{\rho}(0)} \frac{\left|\delta(u, x, \sqrt{M} y) - \left\langle D^{2} u(x) \cdot \sqrt{M} y, \sqrt{M} y\right\rangle \right|}{|y|^{n+2s}} \, dy\\
	\leq & \int_{B_{\rho}(0)} \frac{\varepsilon \lambda_{\text{max}}(M)}{|y|^{n+2s-2}} \, dy \\
	= & \frac{\varepsilon w_{n}}{2-2s} \rho^{2-2 s} \lambda_{\text{max}}(M).
\end{aligned}
\end{equation}

Moreover,
\begin{equation}\label{7}
\int_{B_{R}(0) \setminus B_{\rho}(0)} \frac{|\delta(u, x, \sqrt{M} y)|}{|y|^{n+2 s}} \, dy
\leq \frac{2\omega_{n}}{s}\|u\|_{L^{\infty}(B_{\lambda_{\text{max}}^{\frac{1}{2}}(M) R}(x))} \cdot \left(\rho^{-2 s} - R^{-2 s}\right).
\end{equation}

By the asymptotic linearity of \(u\), there exists \(R > 0\) such that for \(|y| > R\),
\[
|\delta(u, x, \sqrt{M} y)| \leq 2 L |\sqrt{M} y| \leq 2 L \lambda_{\text{max}}^{\frac{1}{2}}(M) \cdot |y|.
\]
Thus,
\begin{equation}\label{8}
\int_{\mathbb{R}^{n} \setminus B_{R}(0)} \frac{|\delta(u, x, \sqrt{M} y)|}{|y|^{n+2 s}} \, dy
\leq \frac{2 L w_{n} R^{1-2 s} \lambda_{\text{max}}^{\frac{1}{2}}(M)}{2 s - 1}.
\end{equation}

Combining \eqref{4}-\eqref{8}, we obtain
\[
\begin{aligned}
	& \int_{\mathbb{R}^{n}} \frac{\delta(u, x, y)}{\left|\sqrt{M}^{-1} y\right|^{n+2 s}} \det \sqrt{M}^{-1} \, dy\\
	\leq & \frac{\rho^{2-2 s}}{2-2 s} w_{n} \text{trace}\left(M D^{2} u(x)\right) + \frac{\varepsilon w_{n}}{2-2 s} \rho^{2-2 s} \lambda_{\text{max}}(M) \\
	& + \frac{2\omega_{n}}{s}\|u\|_{L^{\infty}(B_{\lambda_{\text{max}}^{\frac{1}{2}}(M) R}(x))} \cdot \left(\rho^{-2 s} - R^{-2 s}\right)
	+ \frac{2 L w_{n} R^{1-2 s} \lambda_{\text{max}}^{\frac{1}{2}}(M)}{2 s - 1}.
\end{aligned}
\]
By the definition of \(F_{s}[u](x)\),
\[
\begin{aligned}
	F_{s}[u](x)
	&\leq \frac{\rho^{2-2 s} w_{n}}{4(1-s)} \text{trace}\left(M D^{2} u(x)\right) + \frac{\varepsilon w_{n}}{4(1-s)} \rho^{2-2 s} \lambda_{\text{max}}(M) \\
	&+ \frac{\omega_{n}}{s}\|u\|_{L^{\infty}(B_{\lambda_{\text{max}}^{\frac{1}{2}}(M) R}(x))} \cdot \left(\rho^{-2 s} - R^{-2 s}\right) + \frac{L}{2 s - 1} w_{n} R^{1-2 s} \lambda_{\text{max}}^{\frac{1}{2}}(M).
\end{aligned}
\]
Multiplying both sides by \((1-s)\),
\[
\begin{aligned}
	& (1-s) F_{s}[u](x)\\
	\leq & \frac{w_{n}}{4} \rho^{2-2 s} \text{trace}\left(M D^{2} u(x)\right) + \frac{\varepsilon w_{n}}{4} \rho^{2-2 s} \lambda_{\text{max}}(M) \\
	&+ \frac{(1-s) \omega_{n}}{s}\|u\|_{L^{\infty}\left(B_{\lambda_{\text{max}}^{\frac{1}{2}}(M) R}(x)\right)} \cdot \left(\rho^{-2 s} - R^{-2 s}\right) + \frac{(1-s) w_{n} L}{2 s - 1} R^{1-2 s} \lambda_{\text{max}}^{\frac{1}{2}}(M).
\end{aligned}
\]
Taking the limit as \(s \to 1\),
\[
\lim _{s \to 1} \left( (1-s) F_{s}[u](x) \right)
\leq \frac{w_{n}}{4} \text{trace}\left(M D^{2} u(x)\right) + \frac{\varepsilon w_{n}}{4} \lambda_{\text{max}}(M).
\]
By the arbitrariness of \(\varepsilon\),
\[
\lim _{s \to 1} \left( (1-s) F_{s}[u](x) \right)
\leq \frac{w_{n}}{4} \text{trace}\left(M D^{2} u(x)\right).
\]
Since this holds for all \(M \in \mathcal{M}\),
\[
\lim _{s \to 1} \left( (1-s) F_{s}[u](x) \right)
\leq \frac{w_{n}}{4} \inf _{M \in \mathcal{M}} \text{trace}\left(M D^{2} u(x)\right).
\]
This proves one side of the inequality.

Next, we prove the reverse inequality:
\[
\lim _{s \to 1} \left( (1-s) F_{s}[u](x) \right)
\geq \frac{w_{n}}{4} \inf _{M \in \mathcal{M}} \left\{ \text{trace}\left(M D^{2} u(x)\right) \right\}.
\]

From the earlier computation \eqref{4},
\[
\begin{aligned}
	& \int_{B_{\rho}(0)} \frac{\left\langle D^{2} u(x) \cdot \sqrt{M} y, \sqrt{M} y\right\rangle}{|y|^{n+2 s}} \, dy\\
	= & \int_{\mathbb{R}^{n}} \frac{\delta(u, x, \sqrt{M} y)}{|y|^{n+2 s}} \, dy
	- \int_{B_{\rho}(0)} \frac{\delta(u, x, \sqrt{M} y) - \left\langle D^{2} u(x) \cdot \sqrt{M} y, \sqrt{M} y\right\rangle}{|y|^{n+2 s}} \, dy \\
	&- \int_{B_{R}(0) \setminus B_{\rho}(0)} \frac{\delta(u, x, \sqrt{M} y)}{|y|^{n+2 s}} \, dy
	- \int_{\mathbb{R}^{n} \setminus B_{R}(0)} \frac{\delta(u, x, \sqrt{M} y)}{|y|^{n+2 s}} \, dy.
\end{aligned}
\]
By \eqref{5}-\eqref{8} again, for any \(M \in \mathcal{M}\), we have
\[
\begin{aligned}
	\frac{w_{n}}{2-2 s} \rho^{2-2 s} \text{trace}\left(M D^{2} u(x)\right)
	&\leq \int_{\mathbb{R}^{n}} \frac{\delta(u, x, \sqrt{M} y)}{|y|^{n+2 s}} \, dy
	+ \varepsilon \frac{w_{n}}{2(1-s)} \rho^{2-2 s} \lambda_{\text{max}}(M)\\
	&+\frac{2\omega_{n}}{s}\|u\|_{L^{\infty}(B_{\lambda_{\text{max}}^{\frac{1}{2}}(M)R}(x))} \cdot \left(\rho^{-2 s} - R^{-2 s}\right)\\
	&+\frac{2 L w_{n} R^{1-2 s} \lambda_{\text{max}}^{\frac{1}{2}}(M)}{2 s - 1}.
\end{aligned}
\]

By the definition of \(F_{s}[u](x)\), for any \(\varepsilon' > 0\), there exists \(M_{\varepsilon'} \in \mathcal{M}\) such that
\[
\frac{1}{2} \int_{\mathbb{R}^{n}} \frac{\delta\left(u, x, \sqrt{M_{\varepsilon'}} y\right)}{|y|^{n+2 s}} \, dy
\leq F_{s}[u](x) + \varepsilon'.
\]
Substituting this into the inequality,
\[
\begin{aligned}
	\frac{w_{n}}{4(1-s)} \rho^{2-2 s} \text{trace}\left(M_{\varepsilon'} D^{2} u(x)\right)
	&\leq F_{s}[u](x) + \varepsilon'
	+ \varepsilon \frac{w_{n}}{4(1-s)} \rho^{2-2 s} \lambda_{\text{max}}\left(M_{\varepsilon'}\right)\\
	&+\frac{2\omega_{n}}{s}\|u\|_{L^{\infty}(B_{\lambda_{\text{max}}^{\frac{1}{2}}(M)R}(x))} \cdot \left(\rho^{-2 s} - R^{-2 s}\right)\\
	&+\frac{2 L w_{n} R^{1-2 s} \lambda_{\text{max}}^{\frac{1}{2}}(M_{\varepsilon'})}{2 s - 1}.
\end{aligned}
\]
Then,
\[
\begin{aligned}
	\frac{w_{n}}{4(1-s)} \rho^{2-2 s} \inf _{M \in \mathcal{M}} \left\{ \text{trace}\left( M D^{2} u(x) \right) \right\}
	&\leq F_{s}[u](x) + \varepsilon'
	+ \varepsilon \frac{w_{n}}{4(1-s)} \rho^{2-2 s} \lambda_{\text{max}}\left(M_{\varepsilon'}\right) \\
	&+\frac{2\omega_{n}}{s}\|u\|_{L^{\infty}(B_{\lambda_{\text{max}}^{\frac{1}{2}}(M)R}(x))} \cdot \left(\rho^{-2 s} - R^{-2 s}\right)\\
	&+\frac{2 L w_{n} R^{1-2 s} \lambda_{\text{max}}^{\frac{1}{2}}(M_{\varepsilon'})}{2 s - 1}.
\end{aligned}
\]
Multiplying both sides by \((1-s)\) and taking the limit as \(s \to 1\),
\[
\frac{w_{n}}{4} \inf _{M \in \mathcal{M}} \left\{ \text{trace}\left(M D^{2} u(x)\right) \right\}
\leq \lim _{s \to 1} \left( (1-s) F_{s}[u](x) \right) + \varepsilon \frac{w_{n}}{4} \lambda_{\text{max}}\left(M_{\varepsilon'}\right) + \varepsilon'.
\]
By the arbitrariness of \(\varepsilon\) and $\varepsilon'$,
\[
\frac{w_{n}}{4} \inf _{M \in \mathcal{M}} \left\{ \text{trace}\left(M D^{2} u(x)\right) \right\}
\leq \lim _{s \to 1} \left( (1-s) F_{s}[u](x) \right).
\]
Thus, the equality \eqref{GJ-5} holds for any admissible function \(u \in C^{2}\).

Next, we consider the case that $u$ is non-smooth.
Let \(\psi\) be any function satisfying that
\[
\psi(x) = u(x); \quad \psi(y) > u(y) \ (\text{or} \ \psi(y) < u(y)) \ \forall y \in \mathcal{N} \setminus \{x\},
\]
where \(\mathcal{N}\) is a neighborhood of \(x\) and \(\psi \in C^{2}(\overline{\mathcal{N}})\). Define
\[
v(x) :=
\begin{cases}
	\psi(x), & x \in \mathcal{N} \\
	u(x), & x \in \mathbb{R}^{n} \setminus \mathcal{N}
\end{cases}
\]
Taking \(B_{\rho}(0) \subset \mathcal{N}\), similar to the classical proof, we have
\[
\begin{aligned}
	& \int_{\mathbb{R}^{n}} \frac{\delta(v, x, y)}{\left|\sqrt{M}^{-1} y\right|^{n+2s}} \det \sqrt{M}^{-1} \, dy\\
	= & \int_{B_{\rho}(0)} \frac{\left\langle D^{2} \psi(x) \sqrt{M} y, \sqrt{M} y\right\rangle}{|y|^{n+2 s}} \, dy \\
	&+ \int_{B_{\rho}(0)} \frac{\delta(\psi, x, \sqrt{M} y) - \left\langle D^{2} \psi(x) \sqrt{M} y, \sqrt{M} y\right\rangle}{|y|^{n+2 s}} \, dy \\
	&+ \int_{B_{R}(0) \setminus B_{\rho}(0)} \frac{\delta(v, x, \sqrt{M} y)}{|y|^{n+2 s}} \, dy
	+ \int_{\mathbb{R}^{n} \setminus B_{R}(0)} \frac{\delta(u, x, \sqrt{M} y)}{|y|^{n+2 s}} \, dy.
\end{aligned}
\]
By analogous arguments to the classical case, we can prove
\[
\lim _{s \to 1} \left( (1-s) F_{s}[v](x) \right)
\geq \frac{w_{n}}{4} \inf _{M \in \mathcal{M}} \text{trace}\left(M D^{2} \psi(x)\right)
\]
and
\[
\lim _{s \to 1} \left( (1-s) F_{s}[v](x) \right)
\leq \frac{w_{n}}{4} \inf _{M \in \mathcal{M}} \text{trace}\left(M D^{2} \psi(x)\right).
\]
Thus, \eqref{GJ-5} holds in viscosity sense.
\end{proof}

\section{Strict ellipticity}
In this section, we prove the strict ellipticity of \eqref{GJ-4} with
\[
\mathcal{M} = \mathcal{M}_k := \{M = \{M^{ij}\}: \mbox{ there exists } T \in \Gamma_k \mbox{ such that } M^{ij} = (\sigma_k^{1/k})^{ij} (T)\}.
\]
Indeed, we shall prove Theorem \ref{GJ-thm2}.

Without loss of generality, we set \(x = 0\).
\begin{lemma}
\label{GJ-Lem1}
Let $u$ be a function satisfying the conditions in Theorem \ref{GJ-thm2} but not necessary being convex. Then there exist positive constants $\mu_0$ depending only
on $\eta_0$, $n$, $k$, $s$, $SC$ and $L$ and $\mu_1$ depending only on $n$, $k$, $s$, $SC$ and $L$ such that
\begin{equation}
\label{GJ-6}
0 < \mu_0 \leq (1-s) \int_{\mathbb{R}^{n-k+1}} \frac{u(y_1e_1+\cdots+y_{n-k+1}e_{n-k+1}) - u(0)}{|\bar{y}|^{n-k+1+2s}}d\bar{y} \leq \mu_1,
\end{equation}
for any choice $e_1, \ldots, e_{n-k+1}$ of $(n-k+1)$ orthonormal vectors in $\mathbb{R}^n$. Here $\bar{y}$ denotes $\bar{y} = (y_1, \ldots, y_{n-k+1}) \in \mathbb{R}^{n-k+1}$.
\end{lemma}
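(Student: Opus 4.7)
The upper bound $\mu_1$ follows from a direct estimate of the target integral, which I denote by $I_{\bar e}$. Symmetrizing $\bar y \to -\bar y$ rewrites $I_{\bar e}$ as $\tfrac{1}{2}\int_{\mathbb{R}^{n-k+1}}\delta(u,0,\bar y)/|\bar y|^{n-k+1+2s}\,d\bar y$. Splitting at $|\bar y|=1$ and using the semi-concavity bound $\delta(u,0,\bar y) \leq SC|\bar y|^2$ inside together with the Lipschitz bound $|\delta(u,0,\bar y)| \leq 2L|\bar y|$ outside, polar integrations produce sizes $1/(2-2s)$ and $1/(2s-1)$ respectively; multiplying through by $(1-s)$ yields a bound of the form $\tfrac{\sigma_{n-k}}{2}\bigl[\tfrac{SC}{2} + \tfrac{2L(1-s)}{2s-1}\bigr]$, uniform in $s$ on compact subsets of $(1/2,1]$.

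For the lower bound $\mu_0$, the strategy is to test the viscosity assumption against an anisotropic matrix $M \in \mathcal{M}_k$ that concentrates mass near $\mathrm{span}(e_1,\ldots,e_{n-k+1})$. Since $u$ is semi-concave, it is touched from above at $0$ by a paraboloid, so the classical-evaluation lemma of Section~2 upgrades $(1-s)F_s[u](0) \geq \eta_0$ to
\[
(1-s) \int_{\mathbb{R}^n} \frac{\delta(u,0,y)}{|\sqrt{M}^{-1}y|^{n+2s}} \det\sqrt{M}^{-1}\, dy \geq 2\eta_0 \qquad \forall\, M \in \mathcal{M}_k.
\]
Extending the $e_i$ to an orthonormal basis and letting $\Pi$ denote the orthogonal projection onto their span, I would take $T_\alpha := \alpha\Pi + \Pi^\perp$; the expansion $\sigma_k(T_\alpha) = (n-k+1)\alpha + O(\alpha^2)$ places $T_\alpha$ in $\Gamma_k$ for small $\alpha > 0$, and a direct calculation shows that $M_\alpha := \{(\sigma_k^{1/k})^{ij}(T_\alpha)\}$ is diagonal in the basis with eigenvalue $t_\alpha \sim c_1(n,k)\alpha^{1/k-1}$ on $\mathrm{Range}(\Pi)$ and $\tau_\alpha \sim c_2(n,k)\alpha^{1/k}$ on its orthogonal complement.

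Writing $y = (y_\parallel, y^\perp)$ in this basis, symmetry of the kernel in $y^\perp$ together with the algebraic identity
\[
\tfrac{1}{2}\bigl[\delta(u,0,(y_\parallel,y^\perp)) + \delta(u,0,(y_\parallel,-y^\perp))\bigr] = \delta(u,0,y_\parallel) + \tfrac{1}{2}\bigl[\delta(u,y_\parallel,y^\perp)+\delta(u,-y_\parallel,y^\perp)\bigr]
\]
combined with the bound $\delta(u,x,h) \leq \min(SC|h|^2, 2L|h|)$ leads to $I_{M_\alpha} \leq J_1 + J_2$, where $J_1$ and $J_2$ integrate $\delta(u,0,y_\parallel)$ and $\min(SC|y^\perp|^2, 2L|y^\perp|)$ respectively against the kernel. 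Fubini, together with explicit polar integration of the kernel in $y^\perp$ for $J_1$ and in $y_\parallel$ for $J_2$, produces
\[
J_1 = 2\, C_{k,s,n}\, t_\alpha^s\, I_{\bar e}, \qquad (1-s)J_2 \leq C(n,k,s,SC,L)\,\tau_\alpha^s,
\]
where $C_{k,s,n} = \int_{\mathbb{R}^{k-1}}(1+|w|^2)^{-(n+2s)/2}\,dw$ and both constants stay bounded as $s\to 1$. Choosing $\alpha$ small enough, depending only on $\eta_0,n,k,s,SC,L$, so that $(1-s)J_2 \leq \eta_0$, and combining with $(1-s)I_{M_\alpha} \geq 2\eta_0$, we conclude $(1-s)I_{\bar e} \geq \eta_0/(2C_{k,s,n}t_\alpha^s) =: \mu_0 > 0$.

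The delicate point I anticipate is the parameter balance: since $t_\alpha \to \infty$ as $\alpha \to 0$, taking $\alpha$ too small degrades $\mu_0$ through the factor $t_\alpha^{-s}$, while taking $\alpha$ too large prevents $(1-s)J_2$ from being absorbed into $\eta_0$. Verifying that the resulting $\mu_0$ stays bounded below uniformly as $s \to 1$, as required by the stability clause of Theorem~\ref{GJ-thm2}, also requires tracking how $c_1$, $c_2$, $C_{k,s,n}$, and $C(n,k,s,SC,L)$ behave in that limit.
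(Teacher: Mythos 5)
The proof is correct and rests on the same overall idea as the paper's—test the viscosity inequality against an anisotropic $M \in \mathcal{M}_k$ that is large on the $(n-k+1)$-dimensional subspace $\mathrm{span}(e_1,\ldots,e_{n-k+1})$ and small on its complement, then peel off the lower-dimensional integral—but your specific choice of test matrix is genuinely different and, as it turns out, cleaner. The paper takes a diagonal $B \in \Gamma_k$ with $n-k$ zero eigenvalues, one eigenvalue $\epsilon^{k-1}$, and $k-1$ eigenvalues $1/\epsilon$, normalized so that $\sigma_k(B)=1$; the induced $M=\{S_k^{ij}(B)\}$ then has two slightly \emph{different} large eigenvalues ($g(\epsilon)/k$ on the first $n-k$ directions and $\epsilon^{-(k-1)}/k$ on the $(n-k+1)$-th), so the kernel is not quite isotropic on $\mathbb{R}^{n-k+1}$ and the paper must introduce the correction factor $h(\epsilon)=1+(k-1)\epsilon^k$ and estimate the anisotropy error through the $h^{(n+2s)/2}(\epsilon)-h^{-(n-1)/2}(\epsilon)$ term. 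Your $T_\alpha = \alpha\Pi + \Pi^\perp$ is isotropic on each of the two subspaces, so $M_\alpha$ has a single large eigenvalue $t_\alpha$ of multiplicity $n-k+1$ and a single small eigenvalue $\tau_\alpha$ of multiplicity $k-1$; this makes the Fubini/polar-integration step produce exactly a constant times $t_\alpha^s I_{\bar e}$ with no correction term to track, at the cost of carrying the $\sigma_k^{1/k-1}(T_\alpha)$ prefactor (which the paper's $\sigma_k=1$ normalization hides). The explicit second-difference symmetrization identity you state is used only implicitly in the paper when passing to $\min\{2L|y'|, SC|y'|^2\}$; writing it out is an improvement in transparency. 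Your computation of the exponents ($t_\alpha \sim c_1\alpha^{1/k-1}$, $\tau_\alpha \sim c_2\alpha^{1/k}$, $J_1 \sim t_\alpha^s$, $(1-s)J_2 \lesssim \tau_\alpha^s$) is correct, the appeal to the classical-evaluation lemma is legitimate since semi-concavity gives a touching paraboloid at $0$ and Lipschitz gives asymptotically linear growth, and the parameter balance you flag as delicate does in fact close: $\alpha$ is fixed once (depending only on $\eta_0, n, k, s, SC, L$), $C_{k,s,n}$ and $1/(2s-1)$ stay bounded as $s\to1$, and $t_\alpha$ is then a fixed constant, so $\mu_0$ stays bounded below. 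Two trivial points worth tightening: you should note that $\sigma_j(T_\alpha)>0$ for all $j\le k$, not just $j=k$ (easy, since $\sigma_j(T_\alpha) \ge \binom{k-1}{j} > 0$ for $j \le k-1$), and the factor in $J_1$ is $C_{k,s,n}t_\alpha^s I_{\bar e}$ rather than $2C_{k,s,n}t_\alpha^s I_{\bar e}$ once the $\tfrac12$ from $\delta$ is accounted for; neither affects the conclusion.
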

\begin{proof}
For any small $\epsilon > 0$, let
\[
B = \mbox{diag}\big\{\underbrace{0, \ldots, 0}_{n-k}, \epsilon^{k-1}, \underbrace{\frac{1}{\epsilon}, \ldots, \frac{1}{\epsilon}}_{k-1}\big\} \in \Gamma_k.
\]
We find $S_k (B) = 1$ and
\[
M := \{S_k^{ij}\} = \frac{1}{k}\mbox{diag}\big\{\underbrace{g(\epsilon), \ldots, g(\epsilon)}_{n-k}, \frac{1}{\epsilon^{k-1}}, \underbrace{\epsilon, \ldots, \epsilon}_{k-1}\big\} \in \mathcal{M}_k,
\]
where $g (\epsilon) = \frac{1}{\epsilon^{k-1}} + (k-1)\epsilon$. It follows that
\[
\sqrt{M}^{-1} = \sqrt{k}\mbox{diag}\big\{\underbrace{g^{-1/2}(\epsilon), \ldots, g^{-1/2}(\epsilon)}_{n-k}, \epsilon^{\frac{k-1}{2}}, \underbrace{\epsilon^{-1/2}, \ldots, \epsilon^{-1/2}}_{k-1}\big\}.
\]
Denote $y'' = (y_1, \ldots, y_{n-k+1})$, $y' = (y_{n-k+2}, \ldots, y_n)$ and $y = (y'',y')$. We have, by \eqref{GJ-7},
\begin{equation}
\label{GJ-11}
\begin{aligned}
\frac{\eta_0}{1-s}
\leq \,& g^{-\frac{n-k}{2}}(\epsilon)k^{\frac{n}{2}} \int_{\mathbb{R}^n} \frac{u(y'',y') - u (0)}{\left(g^{-1} (\epsilon) (y_1^2+\ldots+y_{n-k}^2) + \epsilon^{k-1} y_{n-k+1}^2 + \epsilon^{-1}|y'|^2\right)^{\frac{n+2s}{2}}}dy\\
= \,& g^{-\frac{n-k}{2}}(\epsilon)k^{\frac{n}{2}} \int_{\mathbb{R}^n} \frac{u(y'',y') - u (y'', 0)}{\left(g^{-1} (\epsilon) (y_1^2+\ldots+y_{n-k}^2) + \epsilon^{k-1} y_{n-k+1}^2 + \epsilon^{-1}|y'|^2\right)^{\frac{n+2s}{2}}}dy\\
& + g^{-\frac{n-k}{2}}(\epsilon)k^{\frac{n}{2}} \int_{\mathbb{R}^n} \frac{u(y'',0) - u (0)}{\left(g^{-1} (\epsilon) (y_1^2+\ldots+y_{n-k}^2) + \epsilon^{k-1} y_{n-k+1}^2 + \epsilon^{-1}|y'|^2\right)^{\frac{n+2s}{2}}}dy\\
:= \,& I_1 + I_2.
\end{aligned}
\end{equation}
Now we estimate $I_1$. By the Lipschitz continuity and semi-concavity of $u$, we have
\[
I_1 \leq \frac{g^{-\frac{n-k}{2}}(\epsilon)k^{\frac{n}{2}}}{2} \int_{\mathbb{R}^n} \frac{\min\{2L |y'|, SC|y'|^2\}}{\left(g^{-1} (\epsilon) (y_1^2+\ldots+y_{n-k}^2) + \epsilon^{k-1} y_{n-k+1}^2 + \epsilon^{-1}|y'|^2\right)^{\frac{n+2s}{2}}}dy.
\]
Using the change of variables
\[
  \begin{cases}
  	z' = y' \\
  	z_j = \frac{y_j}{|y'|}\sqrt{\epsilon}g^{-\frac{1}{2}} (\epsilon), j =1, \ldots, n-k\\
    z_{n-k+1} = \frac{y_{n-k+1}}{|y'|}\epsilon^{\frac{k}{2}}
  \end{cases}
\]
we get
\[
dz = \frac{dy}{|z'|^{n-k+1}}\epsilon^{\frac{n}{2}}g^{-\frac{n-k}{2}} (\epsilon)
\]
and
\begin{equation}
\label{GJ-8}
\begin{aligned}
I_1 \leq \,& \frac{\epsilon^{-\frac{n}{2}}k^{\frac{n}{2}}}{2} \int_{\mathbb{R}^n} \frac{\min\{2L |z'|, SC|z'|^2\}|z'|^{n-k+1}}{\epsilon^{-\frac{n+2s}{2}}|z'|^{n+2s} (1+|z''|^2)^{\frac{n+2s}{2}}}dz\\
= \,& \frac{\epsilon^{s}k^{\frac{n}{2}}}{2} \int_{\mathbb{R}^{k-1}} \frac{\min\{2L |z'|, SC|z'|^2\}}{|z'|^{k+2s-1}}dz'\int_{\mathbb{R}^{n-k+1}} \frac{1}{(1+|z''|^2)^{\frac{n+2s}{2}}}dz''\\
\leq \,& \frac{C_1 \epsilon^{s}}{1-s},
\end{aligned}
\end{equation}
where $C_1$ is the positive constant given by
\[
C_1 := \frac{(1-s)k^{\frac{n}{2}}}{2} \int_{\mathbb{R}^{k-1}} \frac{\min\{2L |z'|, SC|z'|^2\}}{|z'|^{k+2s-1}}dz'\int_{\mathbb{R}^{n-k+1}} \frac{1}{(1+|z''|^2)^{\frac{n+2s}{2}}}dz''<\infty.
\]

Now we consider $I_2$. Let
\[
  \begin{cases}
  	z'' = y''\\
  	z_{j} = \sqrt{\frac{g(\epsilon)}{\epsilon}} \frac{y_j}{|y''|}, j = n-k+2, \ldots, n.
  \end{cases}
\]
We have
\begin{equation}
\label{GJ-9}
\begin{aligned}
I_2 = \,& \int_{\mathbb{R}^n} \frac{\epsilon^{\frac{k-1}{2}} g^{-\frac{n-1}{2}}(\epsilon)k^{\frac{n}{2}}|z''|^{k-1}(u(z'',0)-u(0))}{\left(g^{-1}(\epsilon)|z''|^{2}(1+|z'|^2)+(\epsilon^{k-1}-g^{-1}(\epsilon))z_{n-k+1}^2\right)^{\frac{n+2s}{2}}}dz\\
 \leq \,& k^{\frac{n}{2}}\int_{\{u(z'',0)\geq u(0)\}}\epsilon^{-(k-1)s} h^{\frac{n+2s}{2}}(\epsilon)\frac{u(z'',0)-u(0)}{|z''|^{n+2s-k+1} (1+|z'|^2)^{\frac{n+2s}{2}}}dz\\
 & +k^{\frac{n}{2}} \int_{\{u(z'',0)< u(0)\}}\epsilon^{-(k-1)s} h^{-\frac{n-1}{2}}(\epsilon)\frac{u(z'',0)-u(0)}{|z''|^{n+2s-k+1} (1+|z'|^2)^{\frac{n+2s}{2}}}dz\\
 = \,& k^{\frac{n}{2}}\epsilon^{-(k-1)s} h^{-\frac{n-1}{2}}(\epsilon) \int_{\mathbb{R}^n}\frac{u(z'',0)-u(0)}{|z''|^{n+2s-k+1} (1+|z'|^2)^{\frac{n+2s}{2}}}dz\\
  & +k^{\frac{n}{2}} \epsilon^{-(k-1)s} \Big(h^{\frac{n+2s}{2}}(\epsilon) - h^{-\frac{n-1}{2}}(\epsilon)\Big) \int_{\{u(z'',0)\geq u(0)\}}\frac{u(z'',0)-u(0)}{|z''|^{n+2s-k+1} (1+|z'|^2)^{\frac{n+2s}{2}}}dz\\
  = \,& C_2 \int_{\mathbb{R}^{n-k+1}} \frac{u(z'',0)-u(0)}{|z''|^{n+2s-k+1}}dz''\\
  & +k^{\frac{n}{2}} \epsilon^{-(k-1)s} \Big(h^{\frac{n+2s}{2}}(\epsilon) - h^{-\frac{n-1}{2}}(\epsilon)\Big) \int_{\{u(z'',0)\geq u(0)\}}\frac{u(z'',0)-u(0)}{|z''|^{n+2s-k+1} (1+|z'|^2)^{\frac{n+2s}{2}}}dz,
\end{aligned}
\end{equation}
where the constants $h(\epsilon) := 1+(k-1)\epsilon^k$ and
\[
C_2 = C_2 (n,k,s,\epsilon) := k^{\frac{n}{2}}\epsilon^{-(k-1)s} h^{-\frac{n-1}{2}}(\epsilon) \int_{\mathbb{R}^{k-1}} \frac{1}{(1+|z'|^2)^{\frac{n+2s}{2}}}dz'.
\]
Note that
\[
h^{\frac{n+2s}{2}}(\epsilon) - h^{-\frac{n-1}{2}}(\epsilon) \leq C_3 \epsilon^k,
\]
for some positive constant $C_3$ depending only on $n$ and $k$. By the Lipschitz continuity and semi-concavity of $u$ again, we find
\[
k^{\frac{n}{2}}\int_{\{u(z'',0)\geq u(0)\}}\frac{u(z'',0)-u(0)}{|z''|^{n+2s-k+1} (1+|z'|^2)^{\frac{n+2s}{2}}}dz
\leq \frac{C_4}{1-s},
\]
for some positive constant $C_4$ depending only on $n$, $k$, $s$, $L$ and $C$. Thus, \eqref{GJ-9} becomes
\begin{equation}
\label{GJ-10}
I_2 \leq C_2 \int_{\mathbb{R}^{n-k+1}} \frac{u(z'',0)-u(0)}{|z''|^{n+2s-k+1}}dz'' + \frac{C_3 C_4 \epsilon^{s}}{1-s}.
\end{equation}
Combing \eqref{GJ-11}, \eqref{GJ-8} and \eqref{GJ-9}, we obtain
\[
C_2 \int_{\mathbb{R}^{n-k+1}} \frac{u(z'',0)-u(0)}{|z''|^{n+2s-k+1}}dz'' \geq \frac{\eta_0}{1-s} - \frac{(C_1+C_3C_4)\epsilon^{s}}{1-s}.
\]
Fixing $\epsilon$ sufficiently small, we get a positive constant $\mu_0$ depending only
on $\eta_0$, $n$, $k$, $s$, $C$ and $L$ such that
\[
\int_{\mathbb{R}^{n-k+1}} \frac{u(z'',0)-u(0)}{|z''|^{n+2s-k+1}}dz'' \geq \frac{\mu_0}{1-s}.
\]
By an orthonormal transformation, it is easy to show that if $e_1, \ldots, e_{n-k+1}$ are orthonormal vectors in $\mathbb{R}^n$, we have
\[
\int_{\mathbb{R}^{n-k+1}} \frac{u(y_1e_1+\cdots+y_{n-k+1}e_{n-k+1}) - u(0)}{|\bar{y}|^{n-k+1+2s}}d\bar{y} \geq \frac{\mu_0}{1-s}.
\]

The other part of \eqref{GJ-6} can be proved easily by the Lipschitz continuity and semi-concavity of $u$.
\end{proof}
\begin{remark}
Special cases of Lemma \ref{GJ-Lem1} with $k=n$ and $k=2$ were proved in \cite{caffarelli2015} and \cite{wu2019} respectively.
\end{remark}

With the purpose to prove Theorem \ref{GJ-thm2}, We will show when \(\lambda_{\min}(M) \to 0\),
\[
  (1-s) \int_{\mathbb{R}^{n}} \frac{u(y)-u(0)}{\left|\sqrt{M}^{-1} y\right|^{n+2s}} \det \sqrt{M}^{-1} \, dy \to +\infty.
\]

On the other hand, we will show that there exists \( \tilde{C} > 0\) such that
\[
  (1-s) \int_{\mathbb{R}^{n}} \frac{u(y)-u(0)}{\left|\sqrt{M}^{-1} y\right|^{n+2s}} \det \sqrt{M}^{-1} \, dy \leq \tilde{C}.
\]
This contradiction will complete the proof.

\begin{proof}[Proof of Theorem \ref{GJ-thm2}.]
We first consider the case that $M$ is diagonal. Let $M \in \mathcal{M}_k$ be given by
\[
M = \mbox{diag}\{f_1, \ldots, f_n\} \mbox{ with } f_1 \geq \cdots \geq f_n = \epsilon.
\]
Let
\[
\sqrt{M}^{-1} = \mbox{diag}\{\lambda_1, \ldots, \lambda_n\}.
\]
Thus,
\[
\lambda_1 \leq \cdots \leq \lambda_n = \epsilon^{-1/2}.
\]
By Proposition 2.1 (4) of \cite{Wang1994}, we see
\[\prod_{j=1}^{n} f_j \geq \frac{1}{n^{n}}(C_{n}^{k})^{\frac{n}{k}}:=c_0 > 0\]
and thus,
\[
0 < c_0 \leq f_1 \cdots f_n \leq f_1^{n-1} f_n = \epsilon f_1^{n-1}.
\]
It follows that
\[
f_1 \geq c_1\epsilon^{-\frac{1}{n-1}},
\]
where $c_1 := c_0^{1/(n-1)}$. Without loss of generality, we assume $\eta = (\eta_1, \ldots, \eta_n) \in \Gamma_k$ with $\eta_1 \leq \cdots \leq \eta_n$ such that
$\sigma_k (\eta) = 1$ and $f_i = \frac{1}{k} \sigma_{k-1; i} (\eta)$ for $i = 1, \ldots, n$. Theorem 1 in \cite{LT94} shows that
\[
f_i = \frac{1}{k} \sigma_{k-1; i} (\eta) \geq \theta f_1 \geq \theta c_1\epsilon^{-\frac{1}{n-1}}, \ i = 1, \ldots, n-k+1
\]
for some positive constant $\theta$ depending only on $n$ and $k$. Therefore,
\begin{equation}
	\label{GJ-12}
	\frac{\lambda_{1}}{\lambda_{n-k+1}} = \sqrt{\frac{f_{n-k+1}}{f_{1}}} \geq \sqrt{\theta_0},\quad \frac{1}{\lambda_{n-k+1}} \geq \sqrt{\theta c_1} \epsilon^{-\frac{1}{2(n-1)}} .
\end{equation}
Denote $y'' = (y_1, \ldots, y_{n-k+1})$, $y' = (y_{n-k+2}, \ldots, y_n)$ and $y = (y'',y')$, we find
\[
\begin{aligned}		
	I & := \prod_{j=1}^n \lambda_j \int_{\mathbb{R}^n} \frac{u(y) - u(0)}{\left( \lambda_1^2 y_1^2 + \cdots + \lambda_n^2 y_n^2 \right)^{\frac{n+2s}{2}}} dy\\
	 &= \prod_{j=1}^n \lambda_j \left[ \int_{\mathbb{R}^n} \frac{u(y'', y') - u(y'', 0)}{\left( \lambda_1^2 y_1^2 + \cdots + \lambda_n^2 y_n^2 \right)^{\frac{n+2s}{2}}} dy + \int_{\mathbb{R}^n} \frac{u(y'', 0) - u(0)}{\left( \lambda_1^2 y_1^2 + \cdots + \lambda_n^2 y_n^2 \right)^{\frac{n+2s}{2}}} dy \right] \\
	&:= I_1^{'} + I_2^{'}.
\end{aligned}
\]
We first consider \( I_1^{'} \).
Let
\[
\begin{cases}
	z' = y' ,\\
	z_j = \frac{\lambda_j y_j}{\left( \lambda_{n-k+2}^2 y_{n-k+2}^2 + \cdots + \lambda_n^2 y_n^2 \right)^{\frac{1}{2}}}, \quad j = 1, \cdots, n-k+1.
\end{cases}
\]
We have
\[
dz = \frac{\lambda_1 \cdots \lambda_{n-k+1}}{\left( \lambda_{n-k+2}^2 y_{n-k+2}^2 + \cdots + \lambda_n^2 y_n^2 \right)^{\frac{n-k+1}{2}}} dy
\]
and
\begin{equation}
\label{GJ-13}
\begin{aligned}
I_1^{'} = \,& \prod_{j=n-k+2}^n \lambda_j \int_{\mathbb{R}^n} \frac{u(z'', z') - u (z'', 0)}{\left( \lambda_{n-k+2}^2 z_{n-k+2}^2 + \cdots + \lambda_n^2 z_n^2 \right)^{\frac{2s+k-1}{2}} \cdot (1 + |z''|^2)^{\frac{n+2s}{2}}} dz\\
 = \,& \int_{\mathbb{R}^{n-k+1}}\frac{1}{(1 + |z''|^2)^{\frac{n+2s}{2}}} \cdot \int_{\mathbb{R}^{k-1}} J (z'',z')dz'dz'',
\end{aligned}
\end{equation}
where
\[
J (z'',z') := \prod_{j=n-k+2}^n \lambda_j \frac{u(z'', z') - u (z'', 0)}{\left( \lambda_{n-k+2}^2 z_{n-k+2}^2 + \cdots + \lambda_n^2 z_n^2 \right)^{\frac{2s+k-1}{2}}}.
\]
Since $u$ is convex in $\mathbb{R}^n$, there exists a vector $a = a(z'') \in \mathbb{R}^n$ such that $u(z'', z') - u(z'', 0) \geq a \cdot z'$ for all $z' \in \mathbb{R}^{k-1}$. Therefore, we have
\begin{equation}
\label{GJ-14}
I_1^{'}\geq 0.
\end{equation}

Next, we consider \( I_2^{'}\).
Let
\[
\begin{cases}
z'' = y'',\\
 z_j = \frac{\lambda_j y_j}{\left( \lambda_1^2 y_1^2 + \dots + \lambda_{n-k+1}^2 y_{n-k+1}^2 \right)^{\frac{1}{2}}} , j = n-k+2, \dots, n.
\end{cases}
\]
We have
\[
\begin{aligned}
I_2^{'} &= \prod_{j=1}^n \lambda_j \int_{\mathbb{R}^n} \frac{\left( u(z'', 0) - u(0) \right) \cdot \left( \lambda_1^2 z_1^2 + \dots + \lambda_{n-k+1}^2 z_{n-k+1}^2 \right)^{\frac{k-1}{2}} \cdot \frac{1}{\lambda_{n-k+2} \cdots \lambda_n}}{\left[ \lambda_1^2 z_1^2 + \dots + \lambda_{n-k+1}^2 z_{n-k+1}^2 + \left( \lambda_1^2 z_1^2 + \dots + \lambda_{n-k+1}^2 z_{n-k+1}^2 \right) |z'|^2 \right]^{\frac{n+2s}{2}}} dz\\
&= \prod_{j=1}^{n-k+1} \lambda_j \int_{\mathbb{R}^n} \frac{u(z'', 0) - u(0)}{\left( \lambda_1^2 z_1^2 + \dots + \lambda_{n-k+1}^2 z_{n-k+1}^2 \right)^{\frac{n+2s - k+1}{2}} \cdot \left( 1 + |z'|^2 \right)^{\frac{n+2s}{2}}} dz\\
&\geq \prod_{j=1}^{n-k+1} \lambda_j \int_{\mathbb{R}^{k-1}} \frac{dz'}{\left( 1 + |z'|^2 \right)^{\frac{n+2s}{2}}} \int_{\mathbb{R}^{n-k+1}} \frac{u(z'', 0) - u(0)}{\lambda_{n-k+1}^{n+2s - k+1} |z''|^{n+2s-k+1}} dz''\\
&\geq \left( \frac{\lambda_1}{\lambda_{n-k+1}} \right)^{n-k+1} \cdot \frac{1}{\lambda_{n-k+1}^{2s}} \cdot \frac{\mu_0}{1 - s} \int_{\mathbb{R}^{k-1}} \frac{dz'}{\left( 1 + |z'|^2 \right)^{\frac{n+2s}{2}}},
\end{aligned}
\]
where the last inequality follows from Lemma \ref{GJ-Lem1}.
By \eqref{GJ-12}, we have
\begin{equation}
\label{GJ-17}
I_2^{'} \geq \frac{\mu_3}{1 - s} \epsilon^{-\frac{s}{n-1}}.
\end{equation}
It follows that
\[
(1-\epsilon)I = (1-\epsilon)(I_1^{'} + I_2^{'}) \geq \mu_3 \epsilon^{-\frac{s}{n-1}} \to \infty \mbox{ as } \epsilon \to 0.
\]

Now we consider general $M \in \mathcal{M}_k$ with $\lambda_{\min} (M) = \epsilon$.
Since \(\sqrt{M}\) is symmetric, there exists an orthogonal matrix \(P\), such that
\[
P^T \sqrt{M}^{-1} P = J := \mbox{diag} \{\lambda_1, \ldots, \lambda_n\}.
\]
Let \(\tilde{u}(y) = u(Py)\), we have
  \[
  \begin{aligned}
  	\int_{\mathbb{R}^{n}} \frac{u(y)-u(0)}{\left|\sqrt{M}^{-1} y\right|^{n+2s}} \det \sqrt{M}^{-1} \, dy & = \int_{\mathbb{R}^{n}} \frac{u(Py)-u(0)}{\left(\sum_{j=1}^{n} \lambda_{j}^{2} y_{j}^{2}\right)^{\frac{n+2s}{2}}} \prod_{j=1}^{n} \lambda_{j} \, dy \\
  	& = \int_{\mathbb{R}^{n}} \frac{\tilde{u}(y)-\tilde{u}(0)}{\left(\sum_{j=1}^{n} \lambda_{j}^{2} y_{j}^{2}\right)^{\frac{n+2s}{2}}} \prod_{j=1}^{n} \lambda_{j} \, dy \\
  	& \geq \frac{\mu_3}{1 - s} \epsilon^{-\frac{s}{n-1}}.
  \end{aligned}
  \]
On the other hand,
  \[
  \begin{aligned}
  	& \int_{\mathbb{R}^{n}} \frac{u(y)-u(0)}{\left|\sqrt{M}^{-1} y\right|^{n+2s}} \det \sqrt{M}^{-1} \, dy \\
  	& \leq \frac{1}{2} \int_{\mathbb{R}^{n}} \frac{\min\{2L|y|, SC|y|^2\}}{\left|{\sqrt{M_{0}}^{-1}} y\right|^{n+2s}} \det \sqrt{M_{0}}^{-1} \, dy \\
  	& \leq \frac{\tilde{C}}{1-s}
  \end{aligned}
  \]
which is a contradiction if $\epsilon$ is chosen small enough. Consequently, there exists a positive constant $\theta$ such that
\[
  F_{s}[u](x) = F_{s}^{\theta}[u](x)
\]
and Theorem \ref{GJ-thm2} is proved.
  \end{proof}

\begin{proof}[Proof of Theorem \ref{GJ-thm4}.]
In the case $k=2$, \eqref{GJ-13} becomes
\[
I_1^{'} = \lambda_n \int_{\mathbb{R}^n} \frac{u(z'', z_n) - u (z'', 0)}{\lambda_n^{1+2s} |z_n|^{1+2s} \cdot (1 + |z''|^2)^{\frac{n+2s}{2}}} dz.
\]
Thus, by the Lipschitz continuity and semi-concavity of $u$, we obtain
\begin{equation}
\label{GJ-16}
\begin{aligned}
|I_1^{'}| \leq \,&\frac{ \lambda_n^{-2s}}{2} \int_{\mathbb{R}} \frac{\min\{2L |z_n|, SC |z_n|^2\}}{|z_n|^{1+2s}}dz_n \int_{\mathbb{R}^{n-1}}\frac{1} {(1 + |z''|^2)^{\frac{n+2s}{2}}} dz''\\
 = \,& \frac{\tilde{C}_1 \epsilon^{s}}{1-s},
\end{aligned}
\end{equation}
where $\tilde{C}_1$ is the positive constant given by
\[
\tilde{C}_1 := \frac{(1-s)}{2} \int_{\mathbb{R}} \frac{\min\{2L |z_n|, SC |z_n|^2\}}{|z_n|^{1+2s}}dz_n \int_{\mathbb{R}^{n-1}}\frac{1} {(1 + |z''|^2)^{\frac{n+2s}{2}}} dz'' <\infty.
\]
Combining \eqref{GJ-17} and \eqref{GJ-16}, we get
\[
I \geq \frac{\mu_3}{2(1 - s)} \epsilon^{-\frac{s}{n-1}}
\]
provided $\epsilon$ is sufficiently small. Therefore, Theorem \ref{GJ-thm4} follows as Theorem \ref{GJ-thm2}.
\end{proof}

We finish this paper by some remarks. With the purpose to remove the convexity condition in Theorem \ref{GJ-thm2}, we reconsider $I_1^{'}$ with $k\geq 3$. The first recall the following Proposition proved in \cite{caffarelli2015}.
\begin{proposition}[Proposition B.1 of \cite{caffarelli2015}]
\label{C-C}
Let $\lambda_j > 0$ for $j = 1, \ldots, r$. Then,
\begin{equation}
\label{C-C-eqn}
\frac{\omega_r}{r} \leq \frac{\prod_{j=1}^r \lambda_j}{\sum_{j=1}^r \lambda_j^{-2s}} \int_{\partial B_1^r (0)} \frac{1}{(\sum_{j=1}^r \lambda_j^2 x_j^2)^{\frac{r+2s}{2}}} d \mathcal{H}^{r-1} (x) \leq \frac{\pi^{r/2}}{s\Gamma (2-s) \Gamma(\frac{r}{2} + s)}.
\end{equation}
\end{proposition}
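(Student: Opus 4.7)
The plan is to first convert the sphere integral
\[
I(\lambda) := \int_{\partial B_1^r} \Big(\sum_j \lambda_j^2 \omega_j^2\Big)^{-(r+2s)/2}\, d\mathcal{H}^{r-1}(\omega)
\]
into a more symmetric form via an auxiliary integral on $\mathbb{R}^r$, then extract both bounds by Jensen-type inequalities. I would introduce
\[
J(\lambda) := \int_{\mathbb{R}^r} \frac{1-e^{-|x|^2}}{(\sum_j \lambda_j^2 x_j^2)^{(r+2s)/2}}\, dx
\]
and evaluate it in two ways. In polar coordinates $x=\rho\omega$ the radial and spherical parts decouple; an integration by parts followed by $u=\rho^2$ gives $\int_0^\infty (1-e^{-\rho^2})\rho^{-2s-1}\, d\rho = \Gamma(1-s)/(2s)$, so $J(\lambda) = I(\lambda)\cdot \Gamma(1-s)/(2s)$. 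Alternatively, the substitution $y_j=\lambda_j x_j$ converts $J(\lambda)$ into $(\prod_j\lambda_j)^{-1}\int (1 - e^{-\sum_j \lambda_j^{-2} y_j^2})|y|^{-(r+2s)}\, dy$; passing to polar coordinates in $y$ and rescaling the radius by $(\sum_j \lambda_j^{-2}\omega_j^2)^{1/2}$ yields $J(\lambda) = \frac{\Gamma(1-s)}{2s\prod_j\lambda_j}\int_{\partial B_1^r}(\sum_j \lambda_j^{-2}\omega_j^2)^s\, d\mathcal{H}^{r-1}$. Equating the two expressions gives the key identity
\[
\prod_j\lambda_j\cdot I(\lambda) \;=\; \int_{\partial B_1^r}\Big(\sum_j \lambda_j^{-2}\omega_j^2\Big)^s d\mathcal{H}^{r-1}(\omega).
\]

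For the lower bound, write $\mu_j = \lambda_j^{-2}$. Since the $\omega_j^2$ form a probability distribution on $\partial B_1^r$, Jensen's inequality applied to the concave function $t\mapsto t^s$ gives $(\sum_j \omega_j^2 \mu_j)^s \geq \sum_j \omega_j^2 \mu_j^s$. Integrating over $\partial B_1^r$ and using the symmetry identity $\int_{\partial B_1^r}\omega_j^2\, d\mathcal{H}^{r-1} = \omega_r$ (which follows from $\int |\omega|^2\, d\mathcal{H}^{r-1} = r\omega_r$) yields $\prod_j\lambda_j\cdot I(\lambda) \geq \omega_r \sum_j \lambda_j^{-2s}$, which is in fact stronger than the claimed bound $\omega_r/r$.

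For the upper bound, subadditivity $(\sum_j a_j)^s \leq \sum_j a_j^s$ for $0<s<1$ gives $(\sum_j \mu_j\omega_j^2)^s \leq \sum_j \mu_j^s |\omega_j|^{2s}$, hence $\prod_j\lambda_j\cdot I(\lambda) \leq K_{r,s}\sum_j \lambda_j^{-2s}$ with $K_{r,s} := \int_{\partial B_1^r} |\omega_1|^{2s}\, d\mathcal{H}^{r-1}$. I would evaluate $K_{r,s}$ by computing $\int_{\mathbb{R}^r} e^{-|x|^2}|x_1|^{2s}\, dx$ both as a product of one-dimensional Gaussians (value $\pi^{(r-1)/2}\Gamma(s+1/2)$) and in polar coordinates (value $\tfrac{1}{2}\Gamma(r/2+s)\,K_{r,s}$), obtaining $K_{r,s} = 2\pi^{(r-1)/2}\Gamma(s+1/2)/\Gamma(r/2+s)$. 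A short calculation with Legendre's duplication formula shows $2s\Gamma(2-s)\Gamma(s+1/2)\leq \sqrt{\pi}$ for $s\in(0,1)$ (the function vanishes at $s=0$, equals $\sqrt{\pi}$ at $s=1$, and is monotone in between as one verifies from digamma values), recovering exactly the stated constant $\pi^{r/2}/(s\Gamma(2-s)\Gamma(r/2+s))$.

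The main obstacle is Step 1: spotting the auxiliary integral $J(\lambda)$ whose two evaluations expose the symmetric $\ell^s$-type quantity $(\sum_j \lambda_j^{-2}\omega_j^2)^s$. The numerator $1-e^{-|x|^2}$ is chosen only to regularize the radial integral at both $0$ and $\infty$; any equivalent cutoff would do, and the point is that after $y_j=\lambda_j x_j$ the integrand is again radial in $y$ up to exactly the required power of $\sum_j \lambda_j^{-2}\omega_j^2$ produced by the radial rescaling. Once the identity is in hand, both bounds follow directly from concavity/subadditivity of $t^s$, with only standard gamma-function arithmetic needed to match the paper's constants.
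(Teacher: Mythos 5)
The paper does not prove this proposition; it simply imports it as Proposition B.1 of Caffarelli--Charro \cite{caffarelli2015}, so there is no internal proof to compare against. Your argument is therefore a standalone derivation, and it is correct; in fact it proves \emph{sharper} bounds than the ones stated. The two evaluations of the auxiliary integral $J(\lambda)$ check out: the polar computation gives the radial factor $\int_0^\infty(1-e^{-\rho^2})\rho^{-2s-1}d\rho=\Gamma(1-s)/(2s)$ (valid precisely for $0<s<1$), and the substitution $y_j=\lambda_j x_j$ followed by radial rescaling produces the clean identity
\[
\prod_j\lambda_j\cdot I(\lambda)=\int_{\partial B_1^r}\Big(\sum_j\lambda_j^{-2}\omega_j^2\Big)^s d\mathcal H^{r-1}(\omega),
\]
which is the real insight here. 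Jensen (with $\{\omega_j^2\}$ as weights) and $\int_{\partial B_1^r}\omega_j^2\,d\mathcal H^{r-1}=\omega_r$ then give the lower bound $\omega_r$, strictly better than $\omega_r/r$ and sharp at $\lambda_1=\dots=\lambda_r$. Subadditivity of $t\mapsto t^s$ and the Gaussian computation of $K_{r,s}=\int_{\partial B_1^r}|\omega_1|^{2s}d\mathcal H^{r-1}=2\pi^{(r-1)/2}\Gamma(s+1/2)/\Gamma(r/2+s)$ give the upper bound $K_{r,s}$, which is also sharp (it is attained in the limit $\lambda_1\to 0$ with the other $\lambda_j$ fixed). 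So your constants are the optimal ones, and recovering the weaker constants in the statement reduces to $2s\,\Gamma(2-s)\Gamma(s+1/2)\le\sqrt\pi$ on $(0,1)$.

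The one soft spot is exactly that last inequality. You describe it as ``a short calculation with Legendre's duplication formula,'' but the duplication formula does not by itself yield it; what you then actually appeal to is boundary values plus monotonicity of $h(s)=2s\,\Gamma(2-s)\Gamma(s+1/2)$. That monotonicity is true (one can check $(\log h)'(s)=\tfrac1s+\psi(s+\tfrac12)-\psi(2-s)>0$ using that $\psi'$ is decreasing to bound $\psi(2-s)-\psi(s+\tfrac12)\le(\tfrac32-2s)\psi'(s+\tfrac12)$, then comparing with $1/s$), but as written it is a claim, not a proof, and the reader cannot ``verify from digamma values'' without doing this work. Since this is the only place where the inequality could fail, you should either spell out the digamma estimate or simply state and prove the sharper, cleaner bounds $\omega_r\le\prod_j\lambda_j\cdot I(\lambda)/\sum_j\lambda_j^{-2s}\le 2\pi^{(r-1)/2}\Gamma(s+\tfrac12)/\Gamma(\tfrac r2+s)$, which avoid the gamma-function wrangling entirely and still imply \eqref{C-C-eqn}.
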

Since $u$ is Lipschitz continuous and semi-concave, we have
\[
\begin{aligned}
\int_{\mathbb{R}^{k-1}} |J (z'',z')|dz' \leq \,& \prod_{j=n-k+2}^n \lambda_j \int_{\mathbb{R}^{k-1}}
   \frac{\min\{2L|z'|, SC|z'|^2\}}{\left( \lambda_{n-k+2}^2 z_{n-k+2}^2 + \cdots + \lambda_n^2 z_n^2 \right)^{\frac{2s+k-1}{2}}}dz'\\
   = \,& \int_0^\infty \frac{\min\{2Lr, SCr^2\}}{r^{2s+1}}dr \int_{\partial B_1^{k-1} (0)} \frac{\prod_{j=n-k+2}^n \lambda_j}{(\sum_{j=n-k+2}^n \lambda_j^2 x_j^2)^{\frac{k-1+2s}{2}}} d \mathcal{H}^{k-2} (x)\\
   = \,& \tilde{C}_2 \int_{\partial B_1^{k-1} (0)} \frac{\prod_{j=n-k+2}^n \lambda_j}{(\sum_{j=n-k+2}^n \lambda_j^2 x_j^2)^{\frac{k-1+2s}{2}}} d \mathcal{H}^{k-2} (x),
\end{aligned}
\]
where $\tilde{C}_2$ is the positive constant defined by
\[
\tilde{C}_2 := \int_0^\infty \frac{\min\{2Lr, SCr^2\}}{r^{2s+1}}dr.
\]
By \eqref{C-C-eqn}, we find
\[
\int_{\partial B_1^{k-1} (0)} \frac{\prod_{j=n-k+2}^n \lambda_j}{(\sum_{j=n-k+2}^n \lambda_j^2 x_j^2)^{\frac{k-1+2s}{2}}} d \mathcal{H}^{k-2} (x)
   = O (\sum_{j=n-k+2}^n \lambda_j^{-2s}),
\]
however, $\sum_{j=n-k+2}^n \lambda_j^{-2s}$ can be very large. For instance, let
\[
\tilde{B} = \mbox{diag}\big\{\underbrace{0, \ldots, 0}_{n-k}, \underbrace{(k\epsilon)^{\frac{1}{k-1}}, \ldots, (k\epsilon)^{\frac{1}{k-1}}}_{k-1}, \frac{1}{k\epsilon}\big\} \in \Gamma_k.
\]
We find $S_k (\tilde{B}) = 1$ and
\[
\begin{aligned}
\tilde{M} := \,& \{S_k^{ij}\}\\
 = \,& \mbox{diag}\big\{\underbrace{\epsilon+(k-1)\tilde{g}(\epsilon), \ldots, \epsilon+(k-1)\tilde{g}(\epsilon)}_{n-k}, \underbrace{\tilde{g}(\epsilon), \ldots, \tilde{g}(\epsilon)}_{k-1}, \epsilon\big\} \in \mathcal{M}_k,
\end{aligned}
\]
where
\[
\tilde{g}(\epsilon) := \frac{(k\epsilon)^{-\frac{1}{k-1}}}{k} \sim \epsilon^{-\frac{1}{k-1}}
\]
Thus,
\[
\sqrt{\tilde{M}}^{-1} = \mbox{diag} \big\{\lambda_1, \ldots, \lambda_n\big\}
\]
with $\lambda_1 \leq \cdots \leq \lambda_n$ and
\[
\lambda_{n-k+1} = \cdots = \lambda_{n-1} = \tilde{g}^{-1/2} (\epsilon), \lambda_n = \epsilon^{-1/2}.
\]
It follows that
\[
\sum_{j=n-k+2}^n \lambda_j^{-2s} \sim \epsilon^{-\frac{s}{k-1}} \to \infty \mbox{ as } \epsilon \to 0.
\]
It means that we cannot use similar arguments as \eqref{GJ-16} to deal with the case $k\geq 3$.


\begin{thebibliography}{99}



\bibitem{caffarelli2015} L. A. Caffarelli and F. Charro, {\em On a fractional Monge-Amp\`{e}re operator},
	Annals of PDE, {\bf 1} (2015), no. 1, 1-47.


\bibitem{caffarelli1985} L. A. Caffarelli, L. Nirenberg and J. Spruck, {\em The Dirichlet problem for nonlinear second-order elliptic equations III: Functions of eigenvalues of the Hessians}, Acta Mathematica {\bf 155} (1985), 261-301.

\bibitem{caffarelli2009} L. A. Caffarelli and L. Silvestre, {\em Regularity theory for fully nonlinear integro-differential equations},
	Communications on Pure and Applied Mathematics, {\bf 62} (2009), 597-638.

\bibitem{caffarelli2011} L. A. Caffarelli and L. Silvestre, {\em The Evans-Krylov theorem for nonlocal fully nonlinear equations},
    Annals of Mathematics, {\bf 174} (2011), 1163-1187.

\bibitem{caffarelli2016} L. A. Caffarelli and L. Silvestre, {\em A nonlocal Monge-Amp\`{e}re equation},
	Communications in Analysis and Geometry, {\bf 24} (2016), 307-335.

\bibitem{caffarelli2024} L. A. Caffarelli and M. Soria-Carro, {\em On a family of fully nonlinear integrodifferential operators: from fractional laplacian to nonlocal Monge-Amp\`{e}re}, Analysis and PDE, {\bf 17} (2024), 243-279.

\bibitem{chen2020} W. Chen, Y. Li and P. Ma, {\em The Fractional Laplacian}, World Scientific Publishing, Singapore, 2020.


\bibitem{Guan94}B. Guan {\em The Dirichlet problem for a class of fully nonlinear elliptic equations}, Communications in Partial Differential Equations, {\bf 19} (1994),399-416.

\bibitem{Guan23}B. Guan {\em The Dirichlet problem for fully nonlinear elliptic equations on Riemannian manifolds}, Advances in Mathematics, {\bf 415} (2023), 108899, 32 pp.


\bibitem{GuillenSchwab2012} N. Guillen and R. W. Schwab, {\em Aleksandrov--Bakelman--Pucci type estimates for integro-differential equations},
	Archive for Rational Mechanics and Analysis, {\bf 206} (2012), 111-157.

\bibitem{Ivochkina81} N. M. Ivochkina, {\em The integral method of barrier functions and the Dirichlet problem for equations with operators
    of the Monge-Amp\`{e}re type}, (Russian) Matematicheski{\u i} Sbornik, (N.S.) {\bf 112} (1980), 193-206. [English transl.: Mathematics of the USSR Sbornik, {\bf 40} (1981), 179-192.]

\bibitem{LT94} M. Lin and N. S. Trudinger, {\em On some inequalities for elementary symmetric functions},
	Bull. Austral. Math. Soc. {\bf 50} (1994), 317-326.



%

\bibitem{Spruck05} J. Spruck, {\em Geometric aspects of the theory of fully nonlinear elliptic equations},
	Clay Mathematics Proceedings, {\bf 2}, (2005), 283-309.

\bibitem{Trudinger90} N. S. Trudinger, {\em The Dirichlet Problem for the Prescribed Curvature Equations},
	Archive for Rational Mechanics and Analysis, {\bf 111} (1990), 153-179.

\bibitem{Trudinger95} N. S. Trudinger, {\em On the Dirichlet problem for Hessian equations},
	Acta Mathematica, {\bf 175} (1995), 151-164.

%

\bibitem{Wang1994} X. J. Wang, {\em A class of fully nonlinear elliptic equations and related
	functionals}, Indiana University Mathematics Journal, {\bf 43} (1994), 25-54.

\bibitem{wu2019} Y. Wu, {\em Regularity of fractional analogue of $k$-Hessian operators and a non-local one-phase free boundary problem},
	available at https://www.proquest.com/docview/247343852. PhD thesis. University of Texas at Austin, 2019.
\end{thebibliography}
\end{document}